\theoremstyle{plain}
\newtheorem{theorem}{Theorem}%[section]
\newtheorem{lemma}{Lemma}[section]
\newtheorem{claim}{Claim}%[section]
\newtheorem{case}{Case}
\newtheorem{subcase}{Subcase}[case]
\newtheorem{fact}{Fact}
 \renewenvironment{proof}{
 \noindent{\bf Proof.}\rm} {\mbox{}\hfill\rule{0.5em}{0.809em}\par}
\begin{document}
\date{}
\title{Gallai-Ramsey numbers for graphs with five vertices and eight edges
}
\author{\small Xueli Su, Yan Liu\\
\small School of Mathematical Sciences, South China Normal University,\\
 \small Guangzhou, 510631, P.R. China  \thanks{This work is supported
by the Scientific research fund of the Science and Technology Program of Guangzhou, China(No.202002030183), by the Natural Science Foundation of Qinghai, China (No.2020-ZJ-924).
 Correspondence should be addressed to Yan
Liu(e-mail:liuyan@scnu.edu.cn)}}

\maketitle

\setcounter{theorem}{0}

\begin{abstract}
A Gallai $k$-coloring is a $k$-edge coloring of a complete graph in which
there are no rainbow triangles. For given graphs $G_1, G_2, G_3$
and nonnegative integers $r, s, t$ with that $k=r+s+t$, the
$k$-colored Gallai-Ramsey number $gr_{k}(K_{3}: r\cdot G_1,~
s\cdot G_2, ~t\cdot G_3)$ is the minimum integer $n$ such that every Gallai
$k$-colored $K_{n}$ contains a monochromatic copy of $G_1$ colored
by one of the first $r$ colors or a monochromatic copy of $G_2$
colored by one of the middle $s$ colors or a monochromatic copy of $G_3$
colored by one of the last $t$ colors. In this paper, we
determine the value of Gallai-Ramsey number in the case that
$G_1=B_{3}^{+}$, $G_2=S_{3}^+$ and $G_3=K_3$. Then the Gallai-Ramsey number
$gr_{k}(K_{3}: B_{3}^{+})$ is obtained. Thus the Gllai-Ramsey numbers for graphs with five vertices and eight edges are solved completely.
Furthermore, the the Gallai-Ramsey numbers $gr_{k}(K_{3}: r\cdot B_3^+,~
(k-r)\cdot S_3^+)$, $gr_{k}(K_{3}: r\cdot B_3^+,~
(k-r)\cdot K_3)$ and $gr_{k}(K_{3}: s\cdot S_3^+,~
(k-s)\cdot K_3)$ are obtained, respecticely.

\noindent {\bf Key words:} Gallai coloring, rainbow triangle, monochromatic graph, Gallai-Ramsey number.
\end{abstract}

\vspace{4mm}
\section{Introduction}

All graphs considered in this paper are finite, simple and undirected. For a graph $G$, we use $|G|$ to denote the number of vertices of $G$, say the \emph{order} of $G$.
The complete graph of order $n$ is denoted by $K_{n}$.
For a subset $S\subseteq V(G)$, let $G[S]$ be the subgraph of $G$ induced by $S$.
For two disjoint subsets $A$ and $B$ of $V(G)$, $E_{G}(A, B)=\{ab\in E(G) ~|~ a\in A, b\in B\}$. Let $G_1=(V_1,E_1)$ and $G_2=(V_2,E_2)$ be two graphs. The \emph{union} of $G_1$ and $G_2$, denoted by $G_{1}+G_{2}$, is the graph with the vertex set $V_1\bigcup V_2$ and the edge set $E_1\bigcup E_2$. The \emph{join} of $G_1$ and $G_2$, denoted by $G_1\vee G_2$, is the graph obtained from $G_1+G_2$ by adding all edges joining each vertex of $G_1$ and each vertex of $G_2$.
For any positive integer $k$, we write $[k]$ for the set $\{1, 2, \cdots, k\}$.
An edge coloring of a graph is called \emph{monochromatic} if all edges are colored by the same color. An edge-colored graph is called \emph{rainbow} if no two edges are colored by the same color.

Given graphs $H_{1}$ and $H_{2}$, the classical Ramsey number $R(H_{1}, H_{2})$ is the smallest integer $n$ such that for any 2-edge coloring of $K_{n}$ with red and blue, there exists a red copy of $H_{1}$ or a blue copy of $H_{2}$.
Given graphs $H_{1}, H_{2}, \cdots, H_{k}$, the multicolor Ramsey number $R(H_{1}, H_{2}, \cdots, H_{k})$ is the smallest positive integer $n$ such that for every $k$-edge colored $K_{n}$ with the color set $[k]$, there exists some $i\in [k]$ such that $K_{n}$ contains a monochromatic copy of $H_{i}$ colored by $i$.
The multicolor Ramsey number is an obvious generalization of the classical Ramsey number.
When $H=H_{1}=\cdots=H_{k}$, we simply denote $R(H_{1}, \cdots, H_{k})$ by $R_{k}(H)$. %The problem about computing Ramsey numbers is notoriously difficult.
%For more information on classical Ramsey number, we refer the readers to ~\cite{PRCRT, RRA, RBJR}.
In this paper, we study Ramsey number in Gallai-coloring.
A \emph{Gallai-coloring} is an edge coloring of a complete graph without rainbow triangle.
Gallai-coloring naturally arises in several areas including: information theory \cite{JG}; the study of partially ordered sets, as in Gallai's original paper \cite{Gallai} (his result was restated in \cite{GyarfasSimonyi} in the terminology of graphs); and the study of perfect graphs \cite{KJLA}. %More information on this topic can be found in \cite{JAJT, SCKR}.
A Gallai $k$-coloring is a Gallai-coloring that uses $k$ colors.
Given a positive integer $k$ and graphs $H_{1}, H_{2}, \cdots, H_{k}$, the \emph{Gallai-Ramsey number} $gr_{k}(K_{3}: H_{1}, H_{2}, \cdots, H_{k})$ is the smallest integer $n$ such that every Gallai $k$-colored $K_{n}$ contains a monochromatic copy of $H_{i}$ in color $i$ for some $i\in [k]$.
Clearly, $gr_{k}(K_{3}: H_{1}, H_{2}, \cdots, H_{k})\leq R(H_{1}, H_{2}, \cdots, H_{k})$ for any $k$ and $gr_{2}(K_{3}: H_{1}, H_{2})=R(H_{1}, H_{2})$.
When $H=H_{1}=\cdots=H_{k}$, we simply denote $gr_{k}(K_{3}: H_{1}, H_{2}, \cdots, H_{k})$ by $gr_{k}(K_{3}: H)$.
When $H=H_{1}=\cdots=H_{s} (0\leq s\leq k)$ and $G=H_{s+1}=\cdots=H_{k}$, we use the following shorthand notation $$gr_{k}(K_{3}:s\cdot H, (k-s)\cdot G)=gr_{k}(K_{3}:\underbrace{H, \cdots, H}_{s ~\text{times}}, \underbrace{G, \cdots, G}_{(k-s) ~\text{times}}).$$
For nonnegative integers $r, s, t$, when $G_1=H_{1}=\cdots=H_{r}$, $G_2=H_{r+1}=\cdots=H_{r+s}$,  and $G_3=H_{r+s+1}=\cdots=H_{r+s+t}$ with that $k=r+s+t$, we use the following shorthand notation $$gr_{k}(K_{3}:r\cdot G_1, s\cdot G_2, t\cdot G_3)=gr_{k}(K_{3}:\underbrace{G_1, \cdots, G_1}_{r ~\text{times}}, \underbrace{G_2, \cdots, G_2}_{s ~\text{times}}, \underbrace{G_3, \cdots, G_3}_{t ~\text{times}}).$$

The Gallai-Ramsey numbers $gr_k(K_3: H)$ for all the graphs $H$ on five vertices and at most seven edges are obtained (see \cite{2020Gallai, ZOU2019164, 2020Su, Zhao}).
There are two graphs on five vertices and eight edges, one of which is a wheel graph $W_4$, and the other is a graph $B_3^+$ obtained from the book graph $B_3$ by adding an edge between two vertices with degree two. Song \cite{SONG2020111725} and Mao \cite{2019Ramsey} obtained the Gallai-Ramsey number $gr_k(K_3: W_4)$.
In this paper, we determine the Gallai-Ramsey number $gr_k(K_3: B_3^+)$.
In order to get $gr_k(K_3: B_3^+)$, we actually investigate the Gallai-Ramsey number $gr_{k}(K_{3}:r\cdot B_{3}^{+}, s\cdot S_{3}^+, t\cdot K_3)$, where $S_3^+$ denotes the graph obtained from the star $S_3=K_{1}\vee\overline{K_3}$ by adding an edge between two pendant vertices, stated in Theorem~\ref{Thm:B3+K3}.

\begin{theorem}\label{Thm:B3+K3}
For nonnegative integers $r, s, t$, let $k=r+s+t$. Then
$$
gr_k(K_{3}:r\cdot B_{3}^{+}, s\cdot S_{3}^{+}, t\cdot K_3)=\begin{cases}
17^{\frac{r}{2}}\cdot 5^{\frac{t}{2}}+1, &\text{if $r, t$ are even, ($c_1$)}\\
2\cdot17^{\frac{r}{2}}\cdot 5^{\frac{t-1}{2}}+1, &\text{if $r$ is even, $t$ is odd, ($c_2$)}\\
8\cdot17^{\frac{r-1}{2}}\cdot 5^{\frac{t-1}{2}}+1, &\text{if $r, t$ are odd, ($c_3$)}\\
4\cdot17^{\frac{r-1}{2}}\cdot 5^{\frac{t}{2}}+1, &\text{if $r$ is odd, $t$ is even, ($c_{4}$)}\\
6\cdot17^{\frac{r}{2}}\cdot 5^{\frac{s+t-2}{2}}+1, &\text{if $r$ is even and $s+t$ is even, ($c_5$)}\\
3\cdot17^{\frac{r}{2}}\cdot 5^{\frac{s+t-1}{2}}+1, &\text{if $r$ is even and $s+t$ is odd, ($c_6$)}\\
48\cdot 17^{\frac{r-1}{2}}\cdot 5^{\frac{s+t-3}{2}}+1, &\text{if $r$ is odd and $t\geq1$ and $s+t$ is odd, ($c_7$)}\\
48\cdot 17^{\frac{r-1}{2}}\cdot 5^{\frac{s-3}{2}}+1, &\text{if $r, s$ are odd and $s\geq3$ and $t=0$, ($c_8$)}\\
9\cdot17^{\frac{r-1}{2}}+1, &\text{if $r$ is odd and $s=1$ and $t=0$, ($c_{9}$)}\\
24\cdot 17^{\frac{r-1}{2}}\cdot 5^{\frac{s+t-2}{2}}+1, &\text{if $r$ is odd and $s+t$ is even, ($c_{10}$)}\\
\end{cases}
$$
where $s=0$ for Condition $c_1$ to Condition $c_4$ and $s\geq1$ for Condition $c_5$ to Condition $c_{10}$.
\end{theorem}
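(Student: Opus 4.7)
My approach splits into a construction-based lower bound and a structural, inductive upper bound, with the ten parity cases handled uniformly through substitution on one side and Gallai's structure theorem on the other.

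For the lower bound, I would exhibit, for each of the ten parity regimes $c_1,\dots,c_{10}$, a Gallai $k$-coloring of $K_{n-1}$ (with $n$ the claimed value) in which none of the first $r$ colors contains a monochromatic $B_3^+$, none of the middle $s$ colors contains a monochromatic $S_3^+$, and none of the last $t$ colors contains a monochromatic $K_3$. These constructions are built recursively via the standard substitution operation: given Gallai-colored complete graphs $(K_m,\phi)$ and $(K_{n'},\psi)$ on disjoint color palettes, blowing up each vertex of $\phi$ by a copy of $\psi$ produces a Gallai coloring of $K_{mn'}$ that inherits the ``no forbidden subgraph'' property of both factors. The factor $17$ in the formula comes from a 2-coloring of $K_{17}$ with no monochromatic $B_3^+$ in either color; the factor $5$ comes from the classical 2-coloring of $K_5$ by a pentagon and its complement (no monochromatic triangle). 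The odd-leftover constants $1,2,4,8,3,6,9,24,48$ are supplied by hand-built small base colorings indexed by the relevant parities of $r,s,t$.

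For the upper bound I would induct on $k=r+s+t$. Small $k$ reduces to classical two- and three-color Ramsey numbers involving $B_3^+$, $S_3^+$, and $K_3$, which serve as base cases. For the inductive step, given a Gallai $k$-coloring of $K_n$ with $n$ equal to the claimed value, apply the Gallai--Gy\'arf\'as--Simonyi theorem to obtain a Gallai partition $V_1,\dots,V_p$ whose reduced graph uses only two colors $\alpha,\beta$. Order the parts so that $|V_1|\geq\cdots\geq|V_p|$, and branch on the \emph{types} of $\alpha$ and $\beta$ (each is a $B_3^+$-, $S_3^+$-, or $K_3$-color). In every sub-case I would try to either lift enough structure in the reduced graph to a monochromatic copy of the forbidden graph in color $\alpha$ or $\beta$ (using $R(B_3^+,B_3^+)=18$, $R(S_3^+,S_3^+)$, $R(K_3,K_3)=6$, and the mixed two-color values), or argue that $|V_1|$ is large enough to absorb one or two colors so that the induction hypothesis applies inside $V_1$ with a strictly smaller triple $(r',s',t')$.

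The compatibility check---that after stripping one or two colors the reduced formula is at most $|V_1|$---is the arithmetic heart of the proof and is where each of the ten cases earns its place. The chief obstacle is precisely this bookkeeping: the odd-leftover constants do not fit the clean multiplicative pattern, so when an induction step removes a color the reduced parities can jump between several of $c_1,\dots,c_{10}$, and the required inequality must be verified separately in each transition. A secondary subtlety is the treatment of $S_3^+$ itself: being neither $2$-connected nor a matching, the absence of a monochromatic $S_3^+$ forces every monochromatic triangle in such a color to be isolated from further same-color edges, a structural rigidity that must be exploited whenever an $S_3^+$-color is absorbed into a part. I expect the lower-bound substitutions and the invocation of Gallai's theorem to be essentially mechanical, with the case analysis in the upper bound consuming the bulk of the proof.
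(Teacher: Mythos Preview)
Your overall architecture---substitution constructions for the lower bound, Gallai partition plus induction for the upper bound, with a case split on the types of the two reduced colors---is exactly the paper's approach. But there is one genuine gap in your inductive scheme.

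You propose to induct on $k=r+s+t$ and to apply the hypothesis inside a large part $V_1$ after ``stripping one or two colors''. The paper cannot get by with this. At several places (most transparently in the $q=2$ analysis, and again in Claims and Cases throughout) one needs to bound $|V_1|$ when $V_1$ still carries a $B_3^+$-color, say red, but the surrounding structure forces $V_1$ to contain no red $S_3^+$ (or no red $K_3$). The correct move is then to \emph{reclassify} red as an $S_3^+$-color (respectively a $K_3$-color) inside $V_1$ and invoke the inductive hypothesis for the triple $(r-1,s+1,t)$ (respectively $(r-1,s,t+1)$). These triples have the same $k$, so induction on $k$ does not license the step. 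The paper inducts on the weighted sum $3r+2s+t$, which strictly decreases under every such downgrade; without this (or an equivalent well-founded ordering such as lexicographic on $(r,s,t)$), the bounds you need---e.g.\ $|V_1|\le f(r-1,s+1,t)$ or $|V_1|\le f(r-1,s,t+1)$---are unavailable, and the $q=2$ case and the analogues of the paper's Claim on unions of all-red parts do not close.

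A minor related point: the base cases needed are only the $2$-color Ramsey numbers $R(H_1,H_2)$ with $H_i\in\{K_3,S_3^+,B_3^+\}$; you should not appeal to $3$-color Ramsey numbers, since for $k\ge 3$ the Gallai--Ramsey value is generally strictly smaller than the classical one and those exact multicolor values are not what the formula records.
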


When we set that $s=t=0$, $r=s=0$, $r=t=0$, respectively, we can get the following Theorem~\ref{Thm:B3+} to Theorem~\ref{Thm:K3+}, respectively. %So Theorem~\ref{Thm:B3+} to Theorem~\ref{Thm:B3+K3+} can be see as corollaries obtained from Theorem~\ref{Thm:B3+K3}.

\begin{theorem}\label{Thm:B3+}
For any integer $r\geq1$, $$
gr_r(K_3: B_3^+)= \begin{cases}
17^{r/2}+1,  & \text{if $r$ is even,}\\
4 \cdot 17^{(r - 1)/2}+1,  & \text{if $r$ is odd.}
\end{cases}
$$
\end{theorem}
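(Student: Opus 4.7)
The plan is to deduce Theorem~\ref{Thm:B3+} as an immediate specialization of Theorem~\ref{Thm:B3+K3}: setting $s = t = 0$ yields $k = r$ and reduces the claimed multi-graph quantity to $gr_r(K_3 : B_3^+)$. Under this substitution, $s = 0$ rules out Conditions $c_5$--$c_{10}$ (all of which require $s \geq 1$), and $t = 0$ is even, which rules out Conditions $c_2$ and $c_3$ (both requiring $t$ odd). Only Condition $c_1$ (if $r$ is even) and Condition $c_4$ (if $r$ is odd) remain applicable. Substituting $t = 0$ into $c_1$ gives $17^{r/2}\cdot 5^{0}+1 = 17^{r/2}+1$, and into $c_4$ gives $4\cdot 17^{(r-1)/2}\cdot 5^{0}+1 = 4\cdot 17^{(r-1)/2}+1$, which exactly match the two cases in Theorem~\ref{Thm:B3+}.

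If one wished to prove the statement directly without invoking the full machinery of Theorem~\ref{Thm:B3+K3}, the natural strategy would be the standard two-sided argument. For the lower bound, I would construct extremal Gallai $r$-colorings of $K_{17^{r/2}}$ (even $r$) or $K_{4\cdot 17^{(r-1)/2}}$ (odd $r$) avoiding a monochromatic $B_3^+$, by iteratively blowing up a $2$-edge-coloring of $K_{17}$ witnessing $R(B_3^+,B_3^+)=18$, and, in the odd case, starting the iteration from a monochromatic $K_4$, which is $B_3^+$-free for trivial size reasons. For the upper bound, induction on $r$ via the Gallai partition theorem (every rainbow-triangle-free coloring of a complete graph admits a partition whose reduced $2$-coloring uses only two of the colors) would let one apply $R(B_3^+,B_3^+)=18$ to the reduced graph and the inductive hypothesis inside the blocks.

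The main obstacle in such a direct proof is the parity bookkeeping: one must separate the even and odd cases of $r$ and, at the inductive step, track whether the two colors surviving in the Gallai reduction are both $B_3^+$-colors or whether the blocks satisfy stronger no-monochromatic-subgraph conditions involving $S_3^+$ or $K_3$. Precisely this bookkeeping, together with the smaller targets that inevitably appear inside blocks, is what forces one to formulate and establish the more general Theorem~\ref{Thm:B3+K3} first; once that is in hand, Theorem~\ref{Thm:B3+} is a one-line corollary via the case analysis above.
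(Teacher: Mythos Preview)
Your proposal is correct and matches the paper's approach exactly: the paper states that Theorem~\ref{Thm:B3+} follows by setting $s=t=0$ in Theorem~\ref{Thm:B3+K3}, and your case analysis confirming that only Conditions $c_1$ and $c_4$ survive (and yield the claimed formulas) is precisely the verification needed. Your additional remarks about a hypothetical direct proof are accurate but unnecessary, since the paper treats the result purely as a specialization.
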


\begin{theorem}{\upshape \cite{Fre, AGAS}}\label{Thm:K3}
For a positive integer $t$,
$$
gr_t(K_{3}:K_{3})=\begin{cases}
5^{\frac{t}{2}}+1, &\text{if $t$ is even,}\\
2\cdot 5^{\frac{t-1}{2}}+1, &\text{if $t$ is odd.}
\end{cases}
$$
\end{theorem}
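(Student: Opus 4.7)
The plan is to prove the upper and lower bounds separately, with the upper bound by induction on $t$ using the Gallai partition structure theorem.

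For the lower bound I would exhibit explicit Gallai $t$-colorings of $K_n$ with no monochromatic triangle, where $n = 5^{t/2}$ if $t$ is even and $n = 2 \cdot 5^{(t-1)/2}$ if $t$ is odd. The construction uses the substitution (blow-up) operation iteratively: start from the 2-colored $K_5$ whose color classes are two edge-disjoint Hamiltonian $5$-cycles (a classical triangle-free $2$-coloring), and from the 1-colored $K_2$. Substituting each vertex of a Gallai-colored graph with another Gallai-colored graph (on disjoint colors) preserves both the absence of rainbow triangles and the absence of monochromatic triangles. Applying the $K_5$-substitution $k$ times to a single vertex (for $t = 2k$) or to the $K_2$ (for $t = 2k+1$) produces the required extremal colorings.

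For the upper bound, induct on $t$, with bases $t = 1$ (namely $R(K_3) = 3$) and $t = 2$ (namely $R(K_3, K_3) = 6$). For the inductive step, suppose $K_n$ with $n$ equal to the claimed value admits a Gallai $t$-coloring with no monochromatic triangle. Apply the Gallai partition theorem to obtain a partition $V(K_n) = V_1 \cup \cdots \cup V_m$ with $m \geq 2$, such that between any two parts only one color appears, and the "reduced graph" on $\{V_1, \dots, V_m\}$ uses at most two colors, say red and blue. Since a monochromatic triangle in the reduced graph lifts to one in $K_n$, the reduced graph is a $2$-coloring of $K_m$ with no monochromatic $K_3$, forcing $m \leq R(3,3) - 1 = 5$. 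The cases to handle are then $m = 2$ and $m = 5$ (with $m = 3, 4$ as subordinate cases):

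In the case $m = 2$, only one color (red) appears between the two parts; if either part contained a red edge, a red triangle would arise, so each $V_i$ uses at most $t - 1$ colors and by induction $|V_i| \leq gr_{t-1}(K_3:K_3) - 1$, giving $n \leq 2(gr_{t-1}(K_3:K_3) - 1)$. In the case $m = 5$, the only 2-coloring of $K_5$ without a monochromatic triangle has each color class equal to a Hamiltonian 5-cycle, so every $V_i$ has both a red and a blue neighbor in the reduced graph; therefore $V_i$ can contain edges in neither red nor blue, uses at most $t - 2$ colors, and by induction $|V_i| \leq gr_{t-2}(K_3:K_3) - 1$, giving $n \leq 5(gr_{t-2}(K_3:K_3) - 1)$. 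Substituting the inductive values shows both quantities are at most $f(t) - 1$ (with the $m = 5$ bound tight when $t$ is even, and both bounds tight when $t$ is odd), contradicting $|K_n| = f(t)$.

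The main obstacle is the residual case analysis for $m = 3$ and $m = 4$: here the reduced graph is a 2-coloring of $K_3$ or $K_4$ without a monochromatic triangle, and depending on the pattern some parts may see only one color among their incident reduced-edges (and thus be forbidden to host edges only in that single color). One must enumerate the few possibilities up to color-symmetry and verify in each subcase that the resulting bound, a weighted sum of $gr_{t-1}(K_3:K_3) - 1$ and $gr_{t-2}(K_3:K_3) - 1$, is strictly dominated by the $m = 5$ bound. This is a short but finicky check; once completed, the induction closes and the theorem follows.
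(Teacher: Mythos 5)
Your proposal is correct, and its skeleton --- the blow-up lower bound built from the pentagon $2$-coloring of $K_5$ together with an induction on $t$ driven by the Gallai partition theorem --- is essentially the approach in the paper. Note that the paper never proves this theorem in isolation: it is quoted from the literature and recovered as the case $r=s=0$, $t=k$ of Theorem~1, and the proof of Theorem~1, specialized to these parameters, is exactly your argument (the sharpness example $Q_1=C_{(K_3,K_3)}$ is your pentagon coloring, and the upper-bound induction is yours). The one genuine difference is how the intermediate part-counts are handled. You take an arbitrary Gallai partition and therefore must enumerate the reduced $2$-colorings of $K_3$ and $K_4$, which you rightly flag as the finicky residue; the paper instead chooses a Gallai partition with the \emph{fewest} parts, which immediately yields its Fact~1 (every part sends both red and blue edges to the remaining parts) and rules out $q=3$ outright, so for $q\geq 4$ every part avoids both exchange colors and the single bound $q\bigl(gr_{t-2}(K_3:K_3)-1\bigr)\leq 5\bigl(gr_{t-2}(K_3:K_3)-1\bigr)$ closes the induction with no case analysis. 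For completeness, your residual check does go through: for $m=4$ no part can see only one color (if a part's three reduced neighbors were all joined to it in red, no red edge could appear among those neighbors, forcing a blue triangle), so $n\leq 4\bigl(gr_{t-2}-1\bigr)$; and for $m=3$ exactly one part can see a single color, giving $n\leq \bigl(gr_{t-1}-1\bigr)+2\bigl(gr_{t-2}-1\bigr)$, which equals $4\cdot 5^{(t-2)/2}<5^{t/2}$ for even $t$ and $9\cdot 5^{(t-3)/2}<2\cdot 5^{(t-1)/2}$ for odd $t$. So both routes work; the minimality trick buys a uniform argument free of enumeration, while your version is slightly more elementary at the cost of the (correctly anticipated, and verifiable) case check.
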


\begin{theorem}{\upshape \cite{Wang}}\label{Thm:K3+}
For a positive integer $s$,
$$
gr_s(K_{3}:S_{3}^+)=\begin{cases}
6\cdot5^{\frac{s-2}{2}}+1, &\text{if $s$ is even,}\\
3\cdot 5^{\frac{s-1}{2}}+1, &\text{if $s$ is odd.}
\end{cases}
$$
\end{theorem}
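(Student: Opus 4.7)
My plan is to prove the claimed equality in both directions by induction on $s$, using a Gallai blow-up for the lower bound and the Gallai partition together with two elementary lifting moves for the upper bound. The base cases are $s=1$, where $gr_1(K_3:S_3^+)=4$ is immediate from $|S_3^+|=4$ and $K_3$ having only three vertices, and $s=2$, where $gr_2(K_3:S_3^+)=R(S_3^+,S_3^+)=7$ would be verified by a short case analysis on any $2$-coloring of $K_7$ using the paw-free structure theorem (a graph is paw-free if and only if every component is triangle-free or complete multipartite).

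For the lower bound I set up the recursion $f(s)-1\ge 5\bigl(f(s-2)-1\bigr)$, where $f(s)$ denotes the value claimed by the theorem. Given a Gallai $(s-2)$-coloring of $K_{f(s-2)-1}$ containing no monochromatic $S_3^+$, I take two fresh colors and assemble a $2$-coloring of $K_5$ as the pentagon versus its complement, which is automatically Gallai and triangle-free in each color. Blowing up each vertex of this $K_5$ into a copy of the old coloring, and using the two new colors on edges between different copies, yields a Gallai $s$-coloring of $K_{5(f(s-2)-1)}$. Any monochromatic $S_3^+$ in a new color would contain a triangle meeting three distinct copies, contradicting the pentagon's triangle-freeness; any monochromatic $S_3^+$ in an old color would lie entirely inside one copy, contradicting the inductive hypothesis. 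Combined with the seed values $f(1)-1=3$ and $f(2)-1=6$, this recurrence reproduces both parity cases of the formula.

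For the upper bound, let $n=f(s)$ and consider any Gallai $s$-coloring of $K_n$. Apply the Gallai partition to obtain parts $V_1,\dots,V_m$ with $m\ge 2$ and a reduced $2$-coloring of $K_m$ in two of the $s$ colors, say $1$ and $2$. Throughout I use two lifting moves: (i) if the reduced graph has a monochromatic triangle in some color $c$ on three parts of total size at least $4$, then $K_n$ contains a monochromatic $S_3^+$ in color $c$; and (ii) if some part $V_i$ contains a color-$c$ edge and is color-$c$-adjacent either to two other parts or to a single other part $V_j$ with $|V_j|\ge 2$, then $K_n$ contains a monochromatic $S_3^+$ in color $c$. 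Assuming no monochromatic $S_3^+$ exists, move (i) forces either $m\le R(K_3,K_3)-1=5$ or every monochromatic reduced triangle to consist of singletons, while move (ii) forces colors $1$ and $2$ to appear only in very restricted positions inside each part, so that a sufficiently large part $V_i$ either is color-$c$-isolated from non-singletons for both $c\in\{1,2\}$ or carries a Gallai coloring in at most $s-2$ colors.

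The main difficulty, which I expect to occupy most of the argument, is the endgame bookkeeping when $m$ is small and the reduced coloring sits near one of the extremal configurations avoiding monochromatic $S_3^+$. Here I must show that the arithmetic $n=f(s)$ combined with $m\le 5$ and the restrictions above forces some part to have size at least $f(s-2)$ and to carry a genuine Gallai $(s-2)$-coloring, which then triggers the inductive hypothesis. The parity split in the final formula, with coefficients $3$ for odd $s$ and $6$ for even $s$, should fall out of the recurrence $f(s)=5f(s-2)-4$ propagated from $f(1)=4$ and $f(2)=7$ once each small-$m$ subcase is eliminated by a tailored application of lifting moves (i) and (ii).
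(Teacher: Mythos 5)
Your overall route is the same one the paper uses (it obtains this statement as the $r=t=0$ specialization of Theorem~\ref{Thm:B3+K3}): a blow-up of the triangle-free $2$-coloring of $K_5$ on seeds of order $3$ (odd $s$) and $6$ (even $s$) for the lower bound, and a Gallai-partition induction with the recursion $f(s)-1=5(f(s-2)-1)$ for the upper bound. Your lifting moves (i) and (ii) are correct, the seeds and parity arithmetic check out, and the contrapositive pigeonhole framing (with $m\le 5$ and all parts free of the two reduced colors, some part has order at least $\lceil n/5\rceil=f(s-2)$ and carries at most $s-2$ colors) is exactly the computation in Case~\ref{case:r=0} of the paper's proof. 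But there is a genuine gap: you never invoke \emph{minimality} of the Gallai partition, and this is the device the paper's proof cannot do without. The paper chooses a Gallai partition with the \emph{smallest} number of parts; then (Fact~\ref{fact:red k3}) a monochromatic triangle in a reduced color is impossible outright, because the union $U$ of the at most three parts containing it would be joined to $V(G)\setminus U$ entirely in the other color (else a monochromatic $S_3^+$), producing a two-part Gallai partition and contradicting minimality. This simultaneously gives $m\le R_2(K_3)-1=5$ and, via Fact~\ref{fact:dwi} (every part sees both reduced colors on its cross edges, again by minimality), that no part contains an edge in either reduced color --- precisely the two hypotheses your pigeonhole step needs.

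Without minimality, your moves (i) and (ii) do not deliver those hypotheses. Move (i) only forces monochromatic reduced triangles to sit on three singleton parts, which bounds nothing: a priori $m$ can be large with many singletons, and eliminating that regime requires a further cascade of paw-forcing arguments. Worse, move (ii) permits a non-singleton part $V_i$ with internal color-$1$ edges whose color-$1$ neighborhood is a lone singleton $\{u\}$; such a part is \emph{not} free of the reduced colors, so it carries up to $s-1$ colors rather than $s-2$, and your "carries a genuine Gallai $(s-2)$-coloring" conclusion fails for it. This configuration is in fact killable --- a color-$1$ edge $xy\subseteq V_i$ gives the triangle $xyu$, and for any third vertex $z\in V_i$ the cross edge $uz$ is also color $1$, yielding the paw $xyu$ plus pendant $uz$, so $|V_i|=2$ --- but that argument, and the analogous cleanup for the all-singleton-triangle regime, is exactly the "endgame bookkeeping" your proposal announces and then defers. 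As written, the stated dichotomies do not suffice to reach the pigeonhole step, so the upper bound is not yet proved; supplying either the minimal-partition argument (the paper's way, which makes the endgame trivial) or the ad hoc size-$2$ forcing above would close the gap.
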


When we set $r=0$, $s=0$ and $t=0$, respectively, we can get the Gallai-Ramsey numbers $gr_{k}(K_{3}: s\cdot S_3^+,~
(k-s)\cdot K_3)$, $gr_{k}(K_{3}: r\cdot B_3^+,~
(k-r)\cdot K_3)$ and $gr_{k}(K_{3}: r\cdot B_3^+,~
(k-r)\cdot S_3^+)$, respectively, which are omitted.

To prove Theorem~\ref{Thm:B3+K3}, the following theorem is useful.

\begin{theorem}{\upshape \cite{Gallai, GyarfasSimonyi, CameronEdmonds}} {\upshape (Gallai-partition)}\label{Thm:G-Part}
For any Gallai-coloring of a complete graph $G$, there exists a partition of $V(G)$ into at least two parts such that there are at most two colors on the edges between the parts and there is only one color on the edges between each pair of parts. The partition is called a Gallai-partition.
\end{theorem}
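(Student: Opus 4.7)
The plan is to prove Theorem~\ref{Thm:G-Part} by induction on $n = |V(G)|$. The base case $n = 2$ is immediate, since the single cross-edge of the partition into two singletons is automatically monochromatic, so both conditions hold vacuously. For the inductive step, fix a Gallai-coloring of $G = K_n$ with $n \geq 3$, and for each color $c$ appearing in $G$ let $\bar{G}_c$ denote the spanning subgraph of $K_n$ whose edge set consists of exactly those edges of colors different from $c$. The argument splits into two cases according to whether some $\bar{G}_c$ fails to be connected.

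If some $\bar{G}_c$ is disconnected, let $V_1, \ldots, V_m$ be the vertex sets of its connected components. Then $m \geq 2$, and every edge of $K_n$ between distinct parts $V_i, V_j$ lies outside $\bar{G}_c$ and hence is colored $c$. The parts $V_1, \ldots, V_m$ then form a Gallai-partition in which only the single color $c$ appears between parts, completing this case. If instead $\bar{G}_c$ is connected for every color $c$, I would prove the key claim that in fact at most two distinct colors appear on $G$; the trivial partition of $V(G)$ into singletons is then a Gallai-partition that uses at most two colors on cross-edges and exactly one color on each individual cross-edge, closing the induction.

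To establish the key claim, suppose for contradiction that three distinct colors $c_1, c_2, c_3$ all appear. Fix any edge $uv$ of color $c_1$. For each other vertex $w$, the no-rainbow-triangle condition on the triangle on $\{u, v, w\}$ prevents $\{uw, vw\}$ from simultaneously carrying the two colors $c_2, c_3$, so the incident edges at $u$ and $v$ follow a restricted color pattern classifying $V(G)\setminus\{u,v\}$ according to which of $c_1, c_2, c_3$ appears on the edges to $u$ and to $v$. Using that each $\bar{G}_{c_i}$ is connected, one can propagate these local constraints along paths that avoid color $c_i$ and force a rainbow triangle somewhere, yielding the desired contradiction. I expect the main obstacle to be precisely this propagation step: making rigorous how the connectedness of every $\bar{G}_c$, together with the Gallai condition, prevents three or more colors from coexisting. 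A natural route is to first restrict attention to the subgraph spanned by the three colors $c_1, c_2, c_3$, so that the problem reduces to Gallai-colorings using exactly three colors, and then to verify by a short inductive or case analysis that no such coloring can have all three complements connected. Once this step is carried out, the two-case dichotomy completes the induction and establishes the existence of the desired Gallai-partition.
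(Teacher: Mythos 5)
The paper does not prove Theorem~\ref{Thm:G-Part} at all --- it is quoted with citations to Gallai, Gy\'arf\'as--Simonyi and Cameron--Edmonds --- so your attempt can only be measured against the known proofs, and unfortunately it contains a genuine error. Your Case~1 is fine: if some $\bar{G}_c$ is disconnected, its components do form a Gallai-partition with the single color $c$ between parts. But the key claim on which Case~2 rests --- that if $\bar{G}_c$ is connected for every color $c$, then at most two colors appear --- is false. Take a $2$-coloring of $K_5$ in which both color classes are connected, say the red edges forming a $5$-cycle and the blue edges its complement (also a $5$-cycle), and blow up each vertex into a pair of vertices joined by a green edge. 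The result is a Gallai $3$-coloring of $K_{10}$: every triangle meeting a green edge has its other two edges of equal color, and all other triangles are red/blue. Here $\bar{G}_{\mathrm{green}}$ is the complete multipartite graph $K_{2,2,2,2,2}$, while $\bar{G}_{\mathrm{red}}$ contains the blown-up blue $C_5$, which is a connected spanning subgraph, and symmetrically for $\bar{G}_{\mathrm{blue}}$; so all three complements are connected although three colors appear. Consequently the ``propagation'' step you flag as the main obstacle cannot be carried out --- no amount of local analysis will prove a false statement, and your proof collapses precisely at the point you identified as delicate.

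The counterexample also explains why the theorem is stated with \emph{two} colors between parts rather than one: blow-ups of $2$-colorings with both color classes connected are exactly the configurations in which no single color disconnects, yet a Gallai-partition (here, the five green pairs) still exists. A correct argument must therefore produce the two-colored partition directly rather than reduce to the monochromatic case. The standard routes are: Gy\'arf\'as--Simonyi's induction on the number of colors, in which two colors are merged into one, the inductive hypothesis (or, at the bottom, the elementary fact that in any $2$-coloring of $K_n$ at least one color class is connected) is applied to the merged coloring, and the resulting partition is refined after un-merging; Cameron--Edmonds' approach through modular (substitution) decomposition; or Gallai's original argument via transitive orientations of comparability graphs. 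Note finally that your framework never actually invokes the induction hypothesis on $n$ --- both of your cases conclude outright --- which is a further sign that the inductive structure as set up does not engage the real difficulty.
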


%Given a Gallai-partition $(V_{1}, V_{2}, \cdots, V_{q})$ of a Gallai-colored complete graph $G$, let $H_{i}=G[V_{i}]$, $h_{i}\in V_{i}$ for each $i\in [q]$.
%and $R=G\left[\{h_{1}, h_{2}, \cdots, h_{q}\}\right]$.
%Then $R$ is said to be the \emph{reduced graph} of $G$ corresponding to the given Gallai-partition. By Theorem~\ref{Thm:G-Part}, all edges in the reduced graph $R$ are colored at most two colors.

\section{Proof of Theorem~\ref{Thm:B3+K3}}
First, recall some known classical Ramsey numbers which are useful.

\begin{lemma}{\upshape \cite{Clm, HHIM, SPR}}\label{Lem:B3+K3+}
\begin{align*}
R_{2}(K_{3})=6, R_2(S_3^+)=7, R_{2}(B_{3}^{+})=18, R(K_3, S_3^+)=7, R(K_3, B_3^+)=9, R(S_3^+, B_3^+)=10.
\end{align*}
\end{lemma}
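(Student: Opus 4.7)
The lemma lists six classical two-color Ramsey values. The plan is to verify each by a standard two-part argument: an explicit 2-edge-coloring of $K_{n-1}$ for the lower bound, and a case analysis on an arbitrary 2-coloring of $K_n$ for the upper bound. All upper bounds bootstrap on $R_2(K_3)=6$, which itself is established by the well-known partition of $E(K_5)$ into two edge-disjoint $5$-cycles (lower bound) together with the pigeonhole on the five edges at a vertex of $K_6$ (upper bound).

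For the four small values $R_2(S_3^+)=7$, $R(K_3,S_3^+)=7$, $R(K_3,B_3^+)=9$, and $R(S_3^+,B_3^+)=10$, I would proceed uniformly. The lower bounds come from taking suitable blow-ups or near-Ramsey constructions on triangle-free graphs such as $C_5$, $K_{3,3}$, or the Petersen graph, and checking by inspection that the paw or book-plus-edge is avoided in each color. For the upper bound, first invoke $R_2(K_3)=6$ to locate a monochromatic triangle $T$ in the appropriate color, then use the remaining vertices to extend: if no monochromatic $B_3^+$ (respectively paw) is to appear, sharp constraints on the edge colors between $T$ and the outside vertices force enough of one color to complete the desired subgraph, typically via a second application of $R_2(K_3)=6$ inside a suitable monochromatic neighborhood of a vertex of $T$.

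The main obstacle is $R_2(B_3^+)=18$. For the lower bound I would use the Paley graph $P_{17}$---the Greenwood--Gleason coloring witnessing $R_2(K_4)\ge 18$---which is self-complementary (since $17\equiv 1 \pmod 4$) and $K_4$-free; since $B_3^+\supseteq K_4$ (the two spine vertices together with the two pages joined by the extra edge span a $K_4$), any $K_4$-free coloring is automatically $B_3^+$-free, so $P_{17}$ provides the required $17$-vertex construction. For the upper bound on $K_{18}$, apply $R_2(K_4)=18$ to obtain a monochromatic $K_4$, say red, on $S=\{a,b,x,y\}$. If some vertex $z\notin S$ has at least two red edges into $S$, say $zu$ and $zv$ with $u,v\in S$, then taking $\{u,v\}$ as spine, the remaining two vertices of $S$ together with $z$ as the three pages, and the red edge between the remaining two vertices of $S$ as the required page-page edge, one obtains a red $B_3^+$. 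Otherwise every outside vertex sends at least three blue edges into $S$, so by double counting at least one vertex of $S$ has at least $11$ blue outside-neighbors, and a detailed case analysis inside this blue neighborhood---using $R_2(K_3)=6$ and the structure imposed by the absence of a red $B_3^+$---produces the required blue $B_3^+$. The hard part will be the last step: ruling out the configurations in which a blue $K_4$ is present but every fifth vertex sends too many red edges into it, which is where the bulk of the bookkeeping lies.
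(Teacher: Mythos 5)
The paper never proves this lemma: the six values are classical and are simply quoted from the cited references, so any self-contained derivation such as yours is by definition a different route from the paper's. Your plan is essentially sound, and its load-bearing claims are correct: $B_3^+$ does contain $K_4$ (the two spine vertices together with the two adjacent pages), so the self-complementary, $K_4$-free Paley graph on $17$ vertices gives $R_2(B_3^+)\ge 18$; your extension step is also right, since an outside vertex $z$ with two red edges into a red $K_4$ on $S$ yields a red $B_3^+$ with spine the two red neighbours of $z$, and otherwise each of the $14$ outside vertices sends at least $3$ blue edges into $S$, so some vertex of $S$ has at least $\lceil 42/4\rceil=11$ blue outside-neighbours. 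One simplification you missed makes your acknowledged ``hard part'' a one-liner: since $B_3^+=K_1\vee S_3^+$, and since $R(S_3^+,B_3^+)=10$ is proved earlier in your own plan (with no circularity, as its proof does not invoke $R_2(B_3^+)$), the $11$-vertex blue neighbourhood contains either a red $B_3^+$ (done) or a blue $S_3^+$, which together with the apex vertex completes a blue $B_3^+$; no bookkeeping about blue $K_4$ configurations is needed. For the four small values your sketch is plausible but thin, and the lower bounds should be made explicit rather than left to ``suitable blow-ups'': red $2K_3$ versus blue $K_{3,3}$ on $6$ vertices handles both values equal to $7$; red $K_{4,4}$ versus blue $2K_4$ gives $R(K_3,B_3^+)\ge 9$; and red $3K_3$ versus blue $K_{3,3,3}$ gives $R(S_3^+,B_3^+)\ge 10$, the last two again exploiting $B_3^+\supseteq K_4$ (the Petersen graph plays no role here). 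What each approach buys is clear: the paper's citation keeps attention on the Gallai--Ramsey induction, for which the lemma is merely an input, while your derivation would make the paper self-contained at the cost of several pages of elementary but genuine case analysis for the upper bounds of the four small values, which your proposal only gestures at.
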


A \emph{sharpness example} of the Ramsey number $R(H_{1}, H_{2})$, denoted by $C_{(H_{1}, H_{2})}$, is a 2-edge colored $K_{R(H_{1}, H_{2})-1}$ with red and blue such that there is neither red copy of $H_{1}$ nor blue copy of $H_{2}$.
For example, $C_{(K_3, K_3)}$ is a 2-edge colored $K_5$ with red and blue such that there is neither red nor blue copy of $K_3$ and $C_{(K_3, S_3^+)}$ is a 2-edge colored $K_6$ with red and blue such that there is neither red copy of $K_3$ nor blue copy of $S_3^+$.

For the sake of notation, let $f(r, s, t)= gr_k(K_{3}:r\cdot B_{3}^{+}, s\cdot S_{3}^{+}, t\cdot K_3)-1$,  claimed in Theorem~\ref{Thm:B3+K3}.
It is easy to check the following inequalities:
\begin{eqnarray}
& &\frac{f(r, s, t-1)}{f(r, s, t)}\leq \frac{1}{2},\label{equ:T1}
\end{eqnarray}
\begin{eqnarray}
& &\frac{f(r, s-1, t)}{f(r, s, t)}\leq \frac{1}{2},\label{equ:T2}
\end{eqnarray}
\begin{eqnarray}
& &\frac{f(r-1, s, t)}{f(r, s, t)}\leq \begin{cases}
\frac{1}{3}, &\text{if $s=1$ and $t=0$,}\\
\frac{5}{16}, &\text{others,}\\
\end{cases}\label{equ:T7}
\end{eqnarray}
\begin{eqnarray}
& &\frac{f(r-1, s+1, t)}{f(r, s, t)}\leq \frac{3}{4},\label{equ:T3}
\end{eqnarray}
\begin{eqnarray}
& &\frac{f(r-1, s, t+1)}{f(r, s, t)}\leq \frac{2}{3},\label{equ:T6}
\end{eqnarray}
\begin{eqnarray}
& &\frac{f(r, s, t-2)}{f(r, s, t)}\leq \frac{1}{5},\label{equ:T8}
\end{eqnarray}
\begin{eqnarray}
& &\frac{f(r, s-1, t-1)}{f(r, s, t)}\leq \frac{1}{5},\label{equ:T9}
\end{eqnarray}
\begin{eqnarray}
& &\frac{f(r, s-2, t)}{f(r, s, t)}\leq \frac{1}{5},\label{equ:T10}
\end{eqnarray}
\begin{eqnarray}
& &\frac{f(r-1, s, t-1)}{f(r, s, t)}\leq \frac{1}{8},\label{equ:T4}
\end{eqnarray}
\begin{eqnarray}
& &\frac{f(r-1, s-1, t)}{f(r, s, t)}\leq \frac{1}{8}, \label{equ:T12}
\end{eqnarray}
\begin{eqnarray}
& &\frac{f(r-1, s+1, t-1)}{f(r, s, t)}\leq \frac{3}{8},\label{equ:T5}
\end{eqnarray}
\begin{eqnarray}
& &\frac{f(r-1, s-1, t+1)}{f(r, s, t)}\leq \frac{5}{16}, \label{equ:T13}
\end{eqnarray}
\begin{eqnarray}
& &\frac{f(r-2, s+1, t+1)}{f(r, s, t)}\leq \frac{15}{34}, \label{equ:T14}
\end{eqnarray}
\begin{eqnarray}
& &\frac{f(r-2, s+1, t)}{f(r, s, t)}\leq \frac{3}{17}, \label{equ:T16}
\end{eqnarray}
\begin{eqnarray}
& &\frac{f(r-2, s, t+2)}{f(r, s, t)}\leq \frac{16}{51},\label{equ:T15}
\end{eqnarray}
\begin{eqnarray}
& &\frac{f(r-2, s, t+1)}{f(r, s, t)}\leq \frac{8}{51},\label{equ:T11}
\end{eqnarray}
\begin{eqnarray}
& &\frac{f(r-2, s, t)}{f(r, s, t)}=\frac{1}{17} .\label{equ:T17}
\end{eqnarray}

Now we prove Theorem~\ref{Thm:B3+K3}.

\begin{proof}
We first prove that $gr_k(K_{3}:r\cdot B_{3}^{+}, s\cdot S_{3}^+, t\cdot K_3)\geq f(r, s, t)+1$ by constructing a Gallai $k$-colored complete graph $G_k$ with order $f(r, s, t)$ which contains no monochromatic copy of $B_{3}^{+}$ colored by one of the first $r$ colors and no monochromatic copy of $S_{3}^+$ colored by one of the middle $s$ colors and no monochromatic copy of $K_{3}$ colored by one of the remaining $t$ colors.
For this construction, we use the sharpness example of classical Ramsey results.
Let $Q_{1}=C_{(K_{3}, K_{3})}$, $Q_{2}=C_{(K_{3}, S_{3}^{+})}$, $Q_{3}=C_{(K_{3}, B_{3}^{+})}$, $Q_{4}=C_{(S_{3}^+, S_{3}^{+})}$, $Q_{5}=C_{(S_{3}^+, B_{3}^{+})}$ and $Q_{6}=C_{(B_{3}^+, B_{3}^{+})}$.
We construct our sharpness example $G_k$ by taking blow-ups of these sharpness examples $Q_{j}~ (j\in [6])$.
A \emph{blow-up} of a 2-edge-colored graph $Q_j$ with two new colors on an $i$-edge-colored graph $G_i$ is a new graph $G_{i+2}$ obtained from $Q_j$ by replacing each vertex of $Q_j$ with $G_i$ and replacing each edge $e$ of $Q_j$ with a monochromatic complete bipartite graph $(V(G_i), V(G_i))$ in the same color with $e$.
By induction on $i$, %first we construct the basic graphs, that is, the first graph for each condition $c_l ~(l\in [10])$.
suppose that we have constructed graphs $G_i$, where $G_i$ is $i$-edge-colored such that $G_i$ contains no rainbow triangle and no appropriately colored monochromatic $B_3^+$ or $S_3^+$ or $K_3$.
If $i=k$, then the construction is completed.
Otherwise, we construct $G_{i+2}$ by taking a blow-up of 2-edge-colored $Q_j$ with two new colors on $G_i$ by distinguishing the following cases. \\
{\bf Case a.} If the two new colors are in the first $r$ colors, then we construct $G_{i+2}$ by making a blow-up of $Q_6$ on $G_i$. \\
{\bf Case b.} If the two new colors are in the middle $s$ colors, then we construct $G_{i+2}$ by making a blow-up of $Q_1$ on $G_i$ where $i\geq1$. \\
{\bf Case c.} If the two new colors are in the last $t$ colors, then we construct $G_{i+2}$ by making a blow-up of $Q_1$ on $G_i$.
%For each two unused colors requiring a monochromatic $B_3^+$, we construct $G_{i+2}$ by making a blow-up of $Q_6$ %colored by the two colors $i+1$ and $i+2$
%on $G_i$.
%For each two unused colors requiring a monochromatic $S_3^+$, we construct $G_{i+2}$ by making a blow-up of $Q_1$
%colored by the two colors $i+1$ and $i+2$
%on $G_i$ where $i\geq1$.
%If we construct $G_{i+2}$ by a blow-up of $Q_4$ on $G_i$ where $i\geq1$, then we can find a monochromatic $S_3^+$ colored by one of the two new colors, a contradiction.
%For each two unused colors requiring a monochromatic $K_3$, we construct $G_{i+2}$ by making a blow-up of $Q_1$
%colored by the two colors $i+1$ and $i+2$
%on $G_i$.

The base graphs (i.e., the first graphs in the induction) for this construction are constructed as follows.\\
For {\bf Condition $c_1$}, the base graph $G_0$ is a single vertex.\\
For {\bf Condition $c_2$}, the base graph $G_1$ is a $K_2$ colored by one of the last $t$ colors.\\
For {\bf Condition $c_3$}, the base graph $G_2$ is a $Q_3$ colored by two colors in which one is in the first $r$ colors and the other is in the last $t$ colors.\\
For {\bf Condition $c_4$}, the base graph $G_1$ is a monochromatic $K_4$ colored by one of the first $r$ colors.\\
For {\bf Condition $c_5$}, the base graph $G_2$ is a $Q_4$ colored by two colors which are in the middle $s$ colors if $s, t$ are both even and the base graph $G_2$ is a $Q_2$ colored by two colors in which one is in the middle $s$ colors and the other is in the last $t$ colors if $s, t$ are both odd.\\
For {\bf Condition $c_6$}, if $s$ is odd and $t$ is even, the base graph $G_1$ is a monochromatic copy of $K_3$ colored by one of the middle $s$ colors.
If $s$ is even and $t$ is odd, the base graph $G_3$ is a blow-up of $Q_1$ on a monochromatic $K_3$, where $K_3$ is colored by one of the middle $s$ colors and $Q_1$ is colored by two new colors one of which is in the middle $s$ colors and the other is in the last $t$ colors.\\
For {\bf Condition $c_7$}, if $s$ is even and $t$ is odd, the base graph $G_4$ is a blow-up of $Q_3$ on $Q_4$, where $Q_4$ is colored by two colors which are in the middle $s$ colors and $Q_3$ is colored by two colors one of which is in the first $r$ colors and the other is in the last $t$ colors .
If $s$ is odd and $t$ is even, then we first construct $G_3$. $G_3$ is a blow-up of $Q_3$ on a monochromatic $K_3$, where $K_3$ is colored by one of middle $s$ colors and $Q_3$ is colored by two colors one of which is in the first $r$ colors and the other is in the last $t$ colors. The base graph $G_4$ is a blow-up of a monochromatic $K_2$ on $G_3$, where $K_2$ is colored by a new color in the last $t$ colors.\\
For {\bf Condition $c_8$}, the base graph $G_4$ is a blow-up of $Q_3$ on $Q_4$, where $Q_4$ is colored by two colors which are in the middle $s$ colors and $Q_3$ is colored by two new colors one of which is in the first $r$ colors and the other is in the middle $s$ colors.\\
For {\bf Condition $c_9$}, the base graph $G_2$ is a $Q_5$ with two colors in which one is in the first $r$ colors and the other is in the middle $s$ colors.\\
For {\bf Condition $c_{10}$}, if $s$ and $t$ are both odd, the base graph $G_3$ is a blow-up of $Q_3$ on a monochromatic $K_3$, where $K_3$ is colored by one of the middle $s$ colors and $Q_3$ is colored by two colors one of which is in the first $r$ colors and the other is in the last $t$ colors.
If $s$ and $t$ are even, the base graph $G_3$ is a blow-up of $Q_3$ on a monochromatic $K_3$, where $K_3$ is colored by one of the middle $s$ colors and $Q_3$ is colored by two new colors one of which is in the first $r$ colors and the other is in the middle $s$ colors.\\
Now we only check $G_k$ constructed by the method as above under the two conditions $c_1$ and $c_5$, others can be checked similarly, omitted. \\
{\bf Condition $c_1$.} $r, t$ are even and $s=0$. Then $k=r+t$. First the basic graph is $G_0=K_1$. Next by Case a, we can construct $G_r$ of order $17^{\frac{r}{2}}$. Finally, by Case c, we continue to construct $G_{r+2}, G_{r+4}, \ldots, G_{r+t}$. So we can get that $|G_k|=17^{\frac{r}{2}}\cdot5^{\frac{t}{2}}=f(r, s, t)$ and $G_k$ is a Gallai $k$-colored complete graph which contains neither monochromatic copy of $B_{3}^{+}$ colored by one of the first $r$ colors nor monochromatic copy of $K_{3}$ colored by one of the remaining $t$ colors.\\
{\bf Condition $c_5$.} If $r$ is even and $s, t$ are old, then we first have the basic graph $G_2=K_6$. Next by Case a, we can construct $G_{r+2}$ of order $6\cdot17^{\frac{r}{2}}$. Then by Case b, we continue to construct $G_{r+s+1}$ of order $6\cdot17^{\frac{r}{2}}\cdot5^{\frac{s-1}{2}}$. Finally, by Case c, we continue to construct $G_{r+s+3}, G_{r+s+5}, \ldots, G_{r+s+t}$. So we can get that $|G_k|=6\cdot17^{\frac{r}{2}}\cdot5^{\frac{s+t-2}{2}}=f(r, s, t)$ and $G_k$ is a Gallai $k$-colored complete graph which contains no appropriately colored monochromatic $B_3^+$ or $S_3^+$ or $K_3$.
Similarly, we can get that $|G_{k}|=f(r, s, t)$ if $r$ is even and $s, t$ are even.\\
Therefore, $gr_k(K_{3}:r\cdot B_{3}^{+}, s\cdot
S_{3}^+, t\cdot K_{3})\geq f(r, s, t)+1$.

%By the above construction, it is clear that $G_{k}$ is Gallai $k$-colored complete graph with order $f(r, s, t)$ and contains no appropriately colored monochromatic copy of $B_3^+$ or $S_3^+$ or $K_3$.
%\begin{table}[h]
%\caption{The base graphs for Condition $c_1$ - Condition $c_{4}$. \label{Table:1}}
%$$
%\begin{array}{|c|c|}
%\hline
%\text{Condition} & \text{The base graph} \\
%\hline
%c_{1} &  % (c1)
%\text{$G_0$ is a single vertex.}
% \\ % end of (c1)
%\hline
%c_{2} &  % (c2)
%\text{$G_1$ is a $K_2$ colored by one of the last $t$ colors.}
% \\ % end of (c2)
%\hline
%c_{3} &  % (c3)
%\tabincell{c}{\text{$G_2$ is a sharpness example $Q_3$ colored by two colors in which one is}\\
%\text{in the first $r$ colors and the other is in the last $t$ colors.}}
% \\ % end of (c3)
%\hline
%c_{4} &  % (c4)
%\text{$G_1$ is a monochromatic $K_4$ colored by one of the first $r$ colors.} \\ % end of (c4)
%\hline
%\end{array}
%$$
%\end{table}

Now we prove that $gr_k(K_{3}:r\cdot B_{3}^{+}, s\cdot
S_{3}^+, t\cdot K_{3})\leq f(r, s, t)+1$ by induction on $3r+2s+t$.
The statement holds in the case that $3r+2s+t\leq 2$ or $k\leq2$ by Lemma~\ref{Lem:B3+K3+}.
So we can assume that $k\geq3, 3r+2s+t\geq3$, and the statement holds for any $r^{'}$, $s^{'}$ and $t^{'}$ such that $3r^{'}+2s^{'}+t^{'}<3r+2s+t$.
Let $n=f(r, s, t)+1$ and $G$ be a Gallai $k$-colored complete graph of order $n$.
Then $n=f(r, s, t)+1\geq f(0, 0, 3)+1=11$.
Suppose, to the contrary, that $G$ contains neither monochromatic copy of $B_{3}^{+}$ in any one of the first $r$ colors nor monochromatic copy of $S_{3}^+$ in any one of the middle $s$ colors nor monochromatic copy of $K_3$ in any one of the last $t$ colors.
By Theorem~\ref{Thm:G-Part}, there exists a Gallai-partition of $V(G)$. Choose a Gallai-partition with the smallest number of parts, say $(V_{1}, V_{2}, \cdots, V_{q})$ and let each part $H_{i}=G[V_{i}]$ for $i\in [q]$.
Then $q\ge 2$.
%Choose one vertex $h_{i}\in V_{i}$ and set $R=G[\{h_{1}, h_{2}, \cdots, h_{q}\}]$.
%Then $R$ is a reduced graph of $G$ colored by at most two colors.

We first consider the case that $q=2$. W.L.O.G, suppose that the color on the edges between two parts is red.
First suppose that red is in the last $t$ colors or in the middle $s$ colors.
Then $H_{1}$ and $H_{2}$ both have no red edges, otherwise, there exists a red $K_{3}$ or a red $S_3^+$, a contradiction.
Hence by the induction hypothesis, $|H_{i}|\leq f(r, s, t-1)$ if red is in the last $t$ colors and $|H_{i}|\leq f(r, s-1, t)$ red is in the middle $s$ colors for each $i\in [2]$.
By Inequalities (\ref{equ:T1}) and (\ref{equ:T2}), we have that $$|G|=|H_{1}|+|H_{2}|\leq 2\times \frac{1}{2}f(r, s, t)<n,$$ a contradiction.
Next suppose that red is in the first $r$ colors.
If both $H_{1}$ and $H_{2}$ have a red edge, then $G$ has a red $B_3^+$, a contradiction.
First suppose that $H_1$ and $H_2$ both have no red edges.
By the induction hypothesis and Inequality (\ref{equ:T7}),
$$|G|=|H_{1}|+|H_{2}|\leq 2f(r-1, s, t)\leq\frac{2}{3} f(r, s, t)<n,$$ a contradiction.
Then suppose that $H_1$ has a red edge, but $H_2$ has no red edges.
Clearly, $H_1$ contains no red $S_3^+$, otherwise, $G$ contains a red $B_3^+$, a contradiction.
We first consider the case that $H_1$ contains a red $K_3$. To avoid a red $B_3^+$, we have that $|H_2|=1$.
Then $H_1$ can be considered that red is moved in the middle $s$ colors.
By the induction hypothesis and Inequality (\ref{equ:T3}), we get that
$$|G|=|H_{1}|+|H_{2}|\leq f(r-1, s+1, t)+1\leq\frac{3}{4} f(r, s, t)+1<n,$$ a contradiction.
So we can assume that $H_1$ contains no red $K_3$.
Thus $H_1$ can be considered that red is moved in the last $t$ colors.
By the induction hypothesis and Inequalities (\ref{equ:T6}) and (\ref{equ:T7}), we get that
$$|G|=|H_{1}|+|H_{2}|\leq f(r-1, s, t+1)+f(r-1, s, t)\leq \left(\frac{2}{3}+\frac{1}{3}\right) f(r, s, t)<n,$$ a contradiction.

Now we can assume that $q\geq 3$ and the two colors appeared in the Gallai-partition $(V_{1}$, $V_{2}$, $\cdots$, $V_{q})$ are red and blue.
If there exists one part (say $V_{1}$) such that all edges joining $V_{1}$ to the other parts are colored by the same color, then we can find a new Gallai-partition with two parts $(V_{1}, V_{2}\bigcup\cdots \bigcup V_{q})$, which contradicts with that $q$ is smallest.
It follows that $q\neq 3$ and the following fact holds.

\begin{fact}\label{fact:dwi}
For each part $V_{i}$, there exist both red and blue edges connecting $V_i$ and the other parts.
\end{fact}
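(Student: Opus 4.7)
My plan is to prove Fact~\ref{fact:dwi} by a short contradiction that exploits the minimality built into our choice of Gallai-partition. Since $(V_1,\ldots,V_q)$ was selected to have the smallest possible number of parts, any structural degeneracy that would enable a coarser Gallai-partition must be absent, and Fact~\ref{fact:dwi} is precisely the assertion that rules such a degeneracy out.

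The execution proceeds in three steps. First, I will assume for contradiction that some part, say $V_1$, has all edges to $V_2\cup\cdots\cup V_q$ colored by a single one of the two colors red and blue; without loss of generality, every such edge is red. Second, I will consider the coarsened partition of $V(G)$ consisting of the two parts $(V_1,\, V_2\cup\cdots\cup V_q)$. Every edge between these two parts is red, so there is only one color between the single pair of parts, and trivially at most two colors on the edges between parts; thus this coarsening is a valid Gallai-partition into $2$ parts. Third, since $q\geq 3$, having produced a Gallai-partition with strictly fewer parts contradicts the minimality of $q$, so every $V_i$ must indeed send both red and blue edges to the remaining parts.

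I do not anticipate any serious obstacle here, since the conclusion is essentially an immediate consequence of the minimality clause that was already invoked one sentence earlier to eliminate the case $q=3$. The only point worth a moment's verification is that coarsening a Gallai-partition by lumping together all parts attached to $V_1$ by a common color does not violate the at-most-two-colors-between-parts requirement of Theorem~\ref{Thm:G-Part}; but the coarsened partition has only one pair of parts, with a single color on the edges between them, so this is automatic and no further bookkeeping is needed.
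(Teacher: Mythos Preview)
Your proposal is correct and essentially identical to the paper's own argument: both assume some $V_1$ has monochromatic edges to the other parts, form the coarser two-part partition $(V_1,\, V_2\cup\cdots\cup V_q)$, and obtain a contradiction to the minimality of $q$ since $q\geq 3$.
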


Now we can assume that $q\ge 4$.
First suppose that red or blue are in the middle $s$ colors. Then we have the following
facts.

\begin{fact}\label{fact:red k3} If red is in the middle $s$ colors, then $G$ contains no red $K_{3}$ and the statement holds for blue symmetrically.
\end{fact}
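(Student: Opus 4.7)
The plan is to argue by contradiction: assume $G$ contains a red $K_{3}$ on vertices $u,v,w$, and then produce a red $S_{3}^{+}$, contradicting the global assumption that no middle color contains a monochromatic $S_{3}^{+}$. I would split into three cases according to how $\{u,v,w\}$ meets the parts $V_{1},\ldots,V_{q}$ of the chosen Gallai-partition of minimum size.

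If all three of $u,v,w$ lie in a single part $V_{i}$, I use Fact~\ref{fact:dwi} to pick a part $V_{j}$ with red edges to $V_{i}$ and any $x\in V_{j}$. Because all $V_{i}V_{j}$ edges are red, the four vertices $u,v,w,x$ induce a red $K_{4}$, which already contains a red $S_{3}^{+}$.

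Suppose next that $u,v\in V_{i}$ and $w\in V_{j}$, so that $uv$ is red inside $V_{i}$ and all edges between $V_{i}$ and $V_{j}$ are red. If $|V_{j}|\ge 2$, any second vertex $w'\in V_{j}$ immediately gives a red $S_{3}^{+}$ on $\{u,v,w,w'\}$. Otherwise $V_{j}$ is a singleton, and I look for another red incidence in the reduced graph: if some other $V_{k}$ sends red edges to $V_{i}$, a vertex of $V_{k}$ together with $u,v,w$ extends the red triangle to a red $S_{3}^{+}$; symmetrically, if some $V_{k}$ sends red edges to $V_{j}$, a vertex of $V_{k}$ attached to $w$ plus the red $K_{3}$ on $u,v,w$ plays the same role. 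If neither happens, then $V_{i}\cup V_{j}$ is joined to every other part by blue edges only, so merging these two parts yields a Gallai-partition with $q-1$ parts, contradicting the minimality of $q$.

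Finally, if $u\in V_{a}$, $v\in V_{b}$, $w\in V_{c}$ for distinct $a,b,c$, then the three inter-part edges among $\{V_{a},V_{b},V_{c}\}$ are red. If any of $V_{a},V_{b},V_{c}$ has size at least $2$, the previous case applies; otherwise each is a singleton. For every other part $V_{d}$, having any of $V_{a}V_{d},V_{b}V_{d},V_{c}V_{d}$ red would close a red $S_{3}^{+}$ on $\{V_{a},V_{b},V_{c},V_{d}\}$, so all such edges are blue. Then $V_{a}\cup V_{b}\cup V_{c}$ is joined to the rest only in blue, and merging these three parts into one yields a Gallai-partition with $q-2$ parts, again contradicting the minimality of $q$. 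The main obstacle I foresee is this final merging argument (and its twin in the two-in-one-part case): a purely local $S_{3}^{+}$-extension is unavailable, and one must invoke the choice of $q$ as smallest to close the proof. The blue-symmetric statement then follows by swapping the roles of the two colors.
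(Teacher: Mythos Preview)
Your proof is correct and follows essentially the same strategy as the paper: a red edge leaving the parts that meet the red triangle produces a red $S_{3}^{+}$, and otherwise those parts can be merged, contradicting the minimality of $q$. The paper bypasses your three-way case split by setting $U$ to be the union of all parts containing a vertex of the red $K_{3}$ and arguing uniformly that either some edge in $E_{G}(U,V(G)\setminus U)$ is red (yielding a red $S_{3}^{+}$ as triangle-plus-pendant) or all such edges are blue, so $(U,V(G)\setminus U)$ is a $2$-part Gallai-partition.
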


Otherwise, suppose that there is a red $K_{3}=v_1 v_2 v_3$ in $G$.
Let $U$ be the union of parts $V_j$ containing a vertex in $\{v_1, v_2, v_3\}$.
Then $U$ contains at most $3$ parts of Gallai-partition in $G$.
If there exists a red edge in $E_{G}(U, V(G)\setminus U)$, then $G$ contains a red $S_{3}^{+}$, a contradiction.
It follows that all edges in $E_{G}(U, V(G)\setminus U)$ are blue.
Then $(U, V(G)\setminus U)$ is a new Gallai-partition which contradicts with that $q\geq4$ and $q$ is smallest.

By Fact~\ref{fact:dwi} and Fact~\ref{fact:red k3}, we have the following Fact.
\begin{fact}\label{new}
If red is in the middle $s$ colors, then every $H_{i}$ has no red edges and the statement holds
for blue symmetrically.
\end{fact}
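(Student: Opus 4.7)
The plan is to prove Fact~\ref{new} by a short contradiction argument that simply glues together the two preceding facts. I would suppose, for contradiction, that red lies in the middle $s$ colors but some part $H_i$ contains a red edge $uv$ (with $u,v\in V_i$). The goal is then to manufacture a red $K_3$ in $G$, which is forbidden by Fact~\ref{fact:red k3}.

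To produce such a triangle, I would invoke Fact~\ref{fact:dwi} applied to the part $V_i$: there must exist some other part $V_j$ (with $j\ne i$) for which the edges of $E_G(V_i,V_j)$ are all red, because the Gallai partition assigns a single color to each pair of parts and $V_i$ must see a red edge leaving it. Pick any vertex $w\in V_j$. Then $uw$ and $vw$ are both red by the choice of $V_j$, and $uv$ is red by assumption, so the triangle on $\{u,v,w\}$ is a red $K_3$. This contradicts Fact~\ref{fact:red k3}. The symmetric statement for blue follows by swapping the roles of red and blue.

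I do not expect a real obstacle here: the fact is a routine consequence of the two named facts together with the color-structure of a Gallai partition. The only thing to be mildly careful about is that $V_j$ genuinely exists and is distinct from $V_i$, but this is exactly what Fact~\ref{fact:dwi} supplies. So the entire argument should fit in a few lines, after which the authors can proceed to use Fact~\ref{new} to control the red (and blue) edges inside every block $H_i$ when building the subsequent case analysis on the reduced Gallai partition.
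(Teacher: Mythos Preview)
Your argument is correct and is precisely the approach the paper intends: it states Fact~\ref{new} as an immediate consequence of Fact~\ref{fact:dwi} and Fact~\ref{fact:red k3} without further detail, and your proof fills in exactly the one-line triangle construction those two facts suggest.
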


Now we consider the following cases.

\begin{case}\label{case:r=0}
Neither red nor blue is in the first $r$ colors.
\end{case}

First we prove the following claim.
\begin{claim}\label{Cla:st}
$G$ contains neither red $K_3$ nor blue $K_3$.
\end{claim}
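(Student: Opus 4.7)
The plan is to dispatch Claim~\ref{Cla:st} by a short case split according to which colour class red (resp.\ blue) belongs to. Since we are in Case~\ref{case:r=0}, neither red nor blue lies in the first $r$ colours, so each of them must be either one of the middle $s$ colours or one of the last $t$ colours. This gives only two sub-cases to consider for red, each of which is already handled by material stated just above the claim.

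First I would show $G$ has no red $K_3$. If red is one of the last $t$ colours, then this is immediate from the standing contradiction hypothesis, namely that $G$ contains no monochromatic $K_3$ in any of the last $t$ colours. If instead red is one of the middle $s$ colours, then Fact~\ref{fact:red k3} applies verbatim and rules out a red $K_3$. Thus in either sub-case $G$ is red-$K_3$-free. Exchanging the roles of red and blue, the same two-line argument shows $G$ is blue-$K_3$-free, which completes the proof of the claim.

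There is essentially no obstacle here: Fact~\ref{fact:red k3} was established precisely to cover the middle-$s$ sub-case, and the last-$t$ sub-case is a restatement of the global assumption on $G$. The only thing to be careful about is to make the trichotomy of colour classes explicit and to observe that Case~\ref{case:r=0} eliminates the first-$r$ possibility, so the two remaining sub-cases exhaust all options for red and for blue.
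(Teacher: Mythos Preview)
Your proposal is correct and follows the same approach as the paper: both arguments use the standing hypothesis for colours among the last $t$ and invoke Fact~\ref{fact:red k3} for colours among the middle $s$. The only cosmetic difference is that you handle red and blue separately (two sub-cases each), whereas the paper performs a joint three-way case split on the pair (both in the last $t$; both in the middle $s$; one in each), but the underlying reasoning is identical.
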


\begin{proof}
If red and blue are both in the last $t$ colors, the statement holds clearly.
Next suppose that both red and blue are within the middle $s$ colors. By Fact~\ref{fact:red k3}, the statement holds. Finally, suppose that red appears in the middle $s$ colors, blue appears in the last $t$ colors, W.L.O.G..
Then $G$ has no blue $K_3$.
By Fact~\ref{fact:red k3}, we have that $G$ contains no red $K_{3}$.
Complete the proof of Claim~\ref{Cla:st}.
\end{proof}

Since $R_2(K_{3})=6$, we have $q\leq5$ by Claim~\ref{Cla:st}.
By Fact~\ref{fact:dwi} and Claim~\ref{Cla:st}, each $H_{i}$ contains neither red nor blue edges.
By the induction hypothesis, for each $H_i$, we have that
$$
|H_i|\leq\begin{cases}
f(r, s, t-2), \text{if red and blue are both in the last $t$ colors,}\\
f(r, s-2, t), \text{if red and blue are both in the middle $s$ colors,}\\
f(r, s-1, t-1), \text{if red is in the middle $s$ colors and blue in the last $t$ colors.}
\end{cases}
$$
By Inequalities (\ref{equ:T8})-(\ref{equ:T10}), we have that
\begin{eqnarray*}
|G|=\sum_{i=1}^{q}|H_{i}| \leq 5\times \frac{1}{5}f(r, s, t) < n,
\end{eqnarray*}
a contradiction.
The proof of Case~\ref{case:r=0} is completed.

Now we consider the case that red or blue are in the first $r$ colors, we have the following fact and claim.

\begin{fact}\label{fact:red edges}
If red is in the first $r$ colors and both $H_{i}$ and $H_{j}$ contain red edges, then the edges in $E_{G}(V_{i}, V_{j})$ must be blue and the statement holds for blue symmetrically.
\end{fact}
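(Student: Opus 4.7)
The plan is to argue by contradiction: suppose $E_G(V_i, V_j)$ is colored red. Fix a red edge $a_ib_i \in E(H_i)$ and a red edge $a_jb_j \in E(H_j)$. Since every edge of $E_G(V_i, V_j)$ is red, together with $a_ib_i$ and $a_jb_j$ these six edges induce a red $K_4$ on $\{a_i, b_i, a_j, b_j\}$. Viewing this $K_4$ as a potential spine $a_ib_i$ with two pages $a_j, b_j$ and extra edge $a_jb_j$, the task reduces to exhibiting a third page, which would produce a red $B_3^+$ and contradict red being one of the first $r$ colors.

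I would split into two cases. If $|V_i| \geq 3$ or $|V_j| \geq 3$, say (without loss of generality) $|V_j| \geq 3$, then any $c_j \in V_j \setminus \{a_j, b_j\}$ is red-adjacent to both $a_i$ and $b_i$ via $E_G(V_i, V_j)$, so $\{a_i, b_i, a_j, b_j, c_j\}$ contains a red $B_3^+$ (spine $a_ib_i$, pages $a_j, b_j, c_j$, extra edge $a_jb_j$). Otherwise $|V_i| = |V_j| = 2$, and the third page must come from another part; since $q \geq 4$, some $V_\ell$ with $\ell \notin \{i, j\}$ exists. If $E_G(V_i, V_\ell)$ is red, then any $c_\ell \in V_\ell$ is red-adjacent to both $a_i$ and $b_i$, giving a red $B_3^+$ on $\{a_i, b_i, a_j, b_j, c_\ell\}$ with spine $a_ib_i$, pages $a_j, b_j, c_\ell$, and extra edge $a_jb_j$; the case $E_G(V_j, V_\ell)$ red is handled symmetrically using spine $a_jb_j$ and extra edge $a_ib_i$.

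If neither subcase applies, then $E_G(V_i, V_\ell)$ and $E_G(V_j, V_\ell)$ are both blue for every $\ell \notin \{i, j\}$, so every edge from $V_i \cup V_j$ to any other part is blue. Merging $V_i$ and $V_j$ then produces a Gallai partition of $V(G)$ with $q - 1 \geq 3$ parts (edges between pairs of parts remain monochromatic, and at most the two colors red and blue appear), contradicting the minimality of $q$. In every case we reach a contradiction, so $E_G(V_i, V_j)$ must be blue, and the statement for blue follows by swapping the roles of the two colors. The only mildly delicate step is the two-vertex subcase, resolved by combining the third-page construction with the minimality of the Gallai partition.
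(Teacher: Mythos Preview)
Your argument is correct and follows the same strategy as the paper: assume $E_G(V_i,V_j)$ is red, build the red $K_4$ on $\{a_i,b_i,a_j,b_j\}$, and then either produce a red $B_3^+$ from a third red-adjacent vertex or merge $V_i$ and $V_j$ into a single part to contradict the minimality of $q$. The paper's version is a bit more streamlined in that it skips your size split entirely: it observes directly that any red edge from $V_i\cup V_j$ to a third part would create a red $B_3^+$, so all such edges are blue and the two-part partition $(V_i\cup V_j,\,V(G)\setminus(V_i\cup V_j))$ already contradicts $q\ge 4$; your Case~A (finding a third page inside a large $V_i$ or $V_j$) is therefore unnecessary, though certainly not wrong.
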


Otherwise, suppose that the edges in $E_{G}(V_{i}, V_{j})$ are red.
To avoid a red $B_3^+$, all edges in $E_G(\{V_{i}, V_{j}\}, V(G)\setminus\{V_{i}, V_{j}\})$ are blue.
Then $(V_i\cup V_j, V(G)\setminus(V_i\cup V_j))$ is a new Gallai-partition which contradicts with that $q\geq4$ and $q$ is smallest.

%\begin{fact}\label{fact:red K4}
%If red is in the first $r$ colors and $h_{i}$ is contained in a red $K_4$ of $R$, then $|H_i|=1$ and the statement holds for blue symmetrically.
%\end{fact}

%Otherwise, suppose that there exists a $h_i\in R$ such that $h_i$ is contained in a red $K_4$ of $R$ and $|H_i|\geq 2$, then there is a red $B_3^+$ in $G$, a contradiction.

%Let $I_{r}=\{h_{i}\in V(R)$ : $H_{i}$ contains at least one red edge but no red $K_3$ or blue edges$\}$, $I_{b}=\{h_{i}\in V(R)$ : $H_{i}$ contains at least on blue edge but no blue $K_3$ or red edges$\}$ and $I_{o}=\{h_{i}\in V(R)$ : $H_{i}$ contains neither red nor blue edges$\}$.
%Clearly by Fact~\ref{fact:red edges}, if red is in the first $r$ colors, then the induced subgraph of $R$ by $I_{r}$ is a blue complete graph and if blue is in the first $r$ colors, then the induced subgraph of $R$ by $I_{b}$ is a red complete graph.

\begin{claim}\label{Cla:B3+K3++}
Let red be in the first $r$ colors and $X$ is the union of $p$ parts of the Gallai-partition such that $G[X]$ contains neither red $B_3^+$ nor blue edges. Then \\
(i) $|X|\leq f(r-1, s, t)+f(r-1, s, t-1)$ if blue is in the last $t$ colors.\\
(ii) $|X|\leq f(r-1, s-1, t+1)+f(r-1, s-1, t)$ if blue is in the middle $s$ colors.\\
(iii) $|X|\leq f(r-2, s, t+1)+f(r-2, s, t)$ if blue is in the first $r$ colors.
\end{claim}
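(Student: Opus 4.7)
I plan to prove all three parts (i)--(iii) by a uniform strategy in which the two terms of the bound $f(\cdot)+f(\cdot)$ come from a single ``special'' part $V_{i_1}$ plus the aggregate of the remaining parts. Part (i) is representative; (ii) and (iii) will follow by relabeling which color class absorbs the removed blue and the re-interpreted red.

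Write $X = V_{i_1}\cup\cdots\cup V_{i_p}$ (assuming $p\ge 2$; the $p=1$ sub-case reduces immediately to the induction hypothesis applied to $G[X]$ with blue deleted). Because $G[X]$ has no blue edges and the Gallai-partition uses only red and blue between parts, every inter-part edge of $X$ is red, and every part is internally blue-free. A simple five-vertex argument then shows that at most one part can contain a red edge: if $ab\in V_{i_1}$ and $cd\in V_{i_2}$ were two internal red edges, then $\{a,b,c,d\}$ plus any fifth vertex $w\in X$ would span a red $B_3^+$ (one verifies the degree sequence $(4,4,3,3,2)$ on the resulting 8 red edges). Designate $V_{i_1}$ as the unique part that may carry red edges; every other part $V_{i_j}$ ($j\ge 2$) has neither red nor blue, so by the induction hypothesis (which is available since $3(r-1)+2s+(t-1)<3r+2s+t$) we get $|V_{i_j}|\le f(r-1,s,t-1)$.

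The same blow-up argument yields two further structural facts about $V_{i_1}$: a red $S_3^+$ inside $V_{i_1}$ combined with any vertex in a second part would complete a red $B_3^+$, and a red $K_3$ inside $V_{i_1}$ combined with any two vertices in $X\setminus V_{i_1}$ would complete a red $K_5-e$ (hence a red $B_3^+$). Thus when $|X\setminus V_{i_1}|\ge 2$, the part $V_{i_1}$ is red-$K_3$-free; red may then be absorbed as an additional last-type ($K_3$-forbidden) color, and induction gives $|V_{i_1}|\le f(r-1,s,t)$. In the remaining sub-case $|X\setminus V_{i_1}|\le 1$, the bound $|V_{i_1}|\le f(r-1,s+1,t-1)$ (obtained by absorbing red as an $S_3^+$-free middle color) together with the trivial $|X\setminus V_{i_1}|\le 1$ and inequality (\ref{equ:T5}) will suffice. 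Finally, picking one vertex per part shows the reduced graph is a red $K_p$, so $p\le 4$, with $p=4$ forcing all singletons.

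Assembling the pieces, in the generic regime I obtain $|X|\le f(r-1,s,t)+(p-1)f(r-1,s,t-1)$, and inequality (\ref{equ:T1}), which gives $f(r-1,s,t-1)\le \tfrac12 f(r-1,s,t)$, absorbs the excess $(p-2)f(r-1,s,t-1)$ via $p\le 4$. The resulting bound is exactly $f(r-1,s,t)+f(r-1,s,t-1)$, proving (i). Cases (ii) and (iii) follow by renaming: in (ii) red is re-absorbed into the middle class (since the deleted blue was in the middle), yielding $f(r-1,s-1,t+1)+f(r-1,s-1,t)$; in (iii) red and blue both live in the first class, so blue is deleted and red is re-absorbed into the last class, yielding $f(r-2,s,t+1)+f(r-2,s,t)$. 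The main technical obstacle will be the red-$K_3$ sub-case: verifying that $f(r-1,s+1,t-1)+1\le f(r-1,s,t)+f(r-1,s,t-1)$ holds across all ten parameter conditions $(c_1),\ldots,(c_{10})$, which reduces to a finite arithmetic check using inequalities (\ref{equ:T5}), (\ref{equ:T3}), and (\ref{equ:T1}) together with the base-case Ramsey values from Lemma~\ref{Lem:B3+K3+}.
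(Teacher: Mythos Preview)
Your overall strategy matches the paper's: isolate one ``special'' part that may carry red edges, bound it by reclassifying red as a $K_3$-forbidden (or $S_3^+$-forbidden) colour, bound the remaining red-free parts with both red and blue deleted, and use $p\le 4$. However, the absorption step you describe for $p=3$ does not work as written.

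Concretely, in the generic regime you obtain $|X|\le f(r-1,s,t)+(p-1)f(r-1,s,t-1)$, and you claim inequality~(\ref{equ:T1}) absorbs the excess $(p-2)f(r-1,s,t-1)$. For $p=3$ this would require
\[
f(r-1,s,t)+2f(r-1,s,t-1)\ \le\ f(r-1,s,t)+f(r-1,s,t-1),
\]
i.e.\ $f(r-1,s,t-1)\le 0$, which is false. Inequality~(\ref{equ:T1}) only gives $2f(r-1,s,t-1)\le f(r-1,s,t)$, hence the weaker bound $|X|\le 2f(r-1,s,t)$, which exceeds the target. The missing ingredient is one more application of the $B_3^+$ structure: if $V_{i_1}$ contains a red edge $ab$ and $p=3$, then any third vertex $a'\in V_{i_1}$ (or any second vertex in $V_{i_2}\cup V_{i_3}$) together with $a,b$ and one vertex from each of the other two parts already spans a red $B_3^+$. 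Hence in this sub-case $|X\setminus V_{i_1}|\le 2$ with both remaining parts singletons, giving $|X|\le |V_{i_1}|+2$, which is what the paper uses. Without this refinement the $p=3$ case is a genuine gap.

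A smaller issue: your $p=1$ reduction (``induction with blue deleted'') yields only $|X|\le f(r,s,t-1)$ in case~(i), and this can exceed the claimed bound (e.g.\ $r=2,s=0,t=1$ gives $f(2,0,0)=17>12=f(1,0,1)+f(1,0,0)$). The paper avoids this by invoking Fact~\ref{fact:dwi} from the ambient Gallai partition, which guarantees every part is red-$S_3^+$-free; then the single part is bounded by $f(r-1,s+1,t-1)$ or $f(r-1,s,t)$ as appropriate. Your internal-to-$X$ argument (``combined with any vertex in a second part'') does not furnish this when $p=1$.
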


\begin{proof}
Since $G[X]$ contains no blue edges, every pair of parts in $G[X]$ are joined by red edges.
Since $G[X]$ has no red $B_3^+$, we have that $p\leq 4$.
By Fact~\ref{fact:red edges}, there is at most one part in $G[X]$ containing red edges.
Then we distinguish two cases.\\
{\bf Case I.} Each part in $G[X]$ has no red edges.\\
If $p\leq 3$, then $$
|X|\leq\begin{cases}
3f(r-1, s, t-1), \text{if blue is in the last $t$ colors,}\\
3f(r-1, s-1, t), \text{if blue is in the middle $s$ colors,}\\
3f(r-2, s, t), \text{if blue is in the first $r$ colors.}
\end{cases}
$$
By the definition of $f(r, s, t)$, we can check that $3f(r-1, s, t-1)\leq f(r-1, s, t)+f(r-1, s, t-1)$, $3f(r-1, s-1, t)\leq f(r-1, s-1, t+1)+f(r-1, s-1, t)$ and $3f(r-2, s, t)\leq f(r-2, s, t+1)+f(r-2, s, t)$. The statement of Claim~\ref{Cla:B3+K3++} holds in this case.
If $p=4$, to avoid a red $B_3^+$, then we have that $|X|=4$.
Clearly, the statement still holds.\\
{\bf Case II.} There is a unique part contains a red edge in $G[X]$, say $H_1$.\\
If $H_1$ contains a red $K_3$, then to avoid a red $B_3^+$, $|X\backslash V_1|\leq 1$.
So $$
|X|\leq |H_1|+1\leq\begin{cases}
f(r-1, s+1, t-1)+1, \text{if blue is in the last $t$ colors,}\\
f(r-1, s, t)+1, \text{if blue is in the middle $s$ colors,}\\
f(r-2, s+1, t)+1, \text{if blue is in the first $r$ colors.}
\end{cases}
$$
In this case, the statement of Claim~\ref{Cla:B3+K3++} holds.
If $H_1$ contains no red $K_3$, then $$
|H_1|\leq\begin{cases}
f(r-1, s, t), \text{if blue is in the last $t$ colors,}\\
f(r-1, s-1, t+1), \text{if blue is in the middle $s$ colors,}\\
f(r-2, s, t+1), \text{if blue is in the first $r$ colors.}
\end{cases}
$$
Since $G[X]$ has no red $B_3^+$, we have that $p\leq 3$.
Furthermore, $|X|=|H_1|+2$ if $p=3$ and $|X|=|H_1|$ if $p=1$. It is easy to check that the statement also holds.
If $p=2$, then the other part in $G[X]$, say $H_2$, contains no red edges.
So $$
|X|=|H_1|+|H_2|\leq\begin{cases}
f(r-1, s, t)+f(r-1, s, t-1), \text{if blue is in the last $t$ colors,}\\
f(r-1, s-1, t+1)+f(r-1, s-1, t), \text{if blue is in the middle $s$ colors,}\\
f(r-2, s, t+1)+f(r-2, s, t), \text{if blue is in the first $r$ colors.}
\end{cases}
$$
Complete the proof of Claim~\ref{Cla:B3+K3++}.
\end{proof}

For the part $V_1$ of Gallai-partition, let $V_R$ $(V_B)$ be the union of parts $V_i$ such that the edges in $E_G(V_1, V_i)$ are red (blue) and $H_R=G[V_R]$, $H_B=G[V_B]$.

\begin{case}\label{case:r>=1}
Exactly one of red and blue is in the first $r$ colors. %and the other is in the last $t$ colors or in the middle $s$ colors.
\end{case}

W.L.O.G., suppose that red appears in the first $r$ colors. Then $G$ has no red $B_3^+$.
If blue appears in the last $t$ colors,
then there is no blue $K_{3}$ in $G$.
If blue appears in the middle $s$ colors, then by Fact~\ref{fact:red k3}, we get that there is also no blue $K_3$ in $G$.
Since $R(B_3^+, K_3)=9$, $q\leq 8$.
By Fact~\ref{fact:dwi}, each $H_i$ contains neither red $S_3^+$ nor blue edges.
If each $H_i$ contains no red edges, then by the induction hypothesis and Inequalities (\ref{equ:T4}) and (\ref{equ:T12}), we get that
$$
|G|=\sum_{i=1}^{q}|H_{i}|\leq\begin{cases}
8 f(r-1, s, t-1) \leq f(r, s, t) < n, \text{if blue is in the last $t$ colors,}\\
8 f(r-1, s-1, t) \leq f(r, s, t) < n, \text{if blue is in the middle $s$ colors.}
\end{cases}
$$
a contradiction.
It follows that there is at least one part containing red edges.
W.L.O.G., suppose that $H_1$ contains a red edge.
So we consider the following two Subcases.

\begin{subcase}\label{case:2.1}
$H_1$ has a red $K_3$.
\end{subcase}
%W.L.O.G., let $H_1$ be the part containing a red $K_3$ but no red $S_3^+$.
To avoid a red $B_3^+$ or a blue $K_3$, we get that $|H_R|=1$ and $H_B$ contains no blue edges.
By the induction hypothesis and Inequalities (\ref{equ:T1}) and (\ref{equ:T5}), (\ref{equ:T2}) and (\ref{equ:T7}), we get that
\begin{eqnarray*}
|G|
&=&|H_{1}|+|H_{R}|+|H_{B}| \\
&\leq & \begin{cases}
f(r-1, s+1, t-1)+1+f(r, s, t-1)\leq\frac{7}{8} f(r, s, t)+1<n, \text{if blue is in the last $t$ colors,}\\
f(r-1, s, t)+1+f(r, s-1, t)\leq\frac{5}{6} f(r, s, t)+1<n, \text{if blue is in the middle $s$ colors.}
\end{cases}
\end{eqnarray*}
a contradiction.

\begin{subcase}\label{case:2.2}
$H_1$ has no red $K_3$.
\end{subcase}

To avoid a blue $K_3$, $H_B$ contains no blue edges. By Claim~\ref{Cla:B3+K3++}, we have that
$$
|H_B|\leq\begin{cases}
f(r-1, s, t)+f(r-1, s, t-1), \text{if blue is in the last $t$ colors,}\\
f(r-1, s-1, t+1)+f(r-1, s-1, t), \text{if blue is in the middle $s$ colors.}
\end{cases}
$$
To avoid a red $B_3^+$, $H_R$ contains no red edges if $|H_R|\geq 3$.
It follows that the edges between each pair of parts in $H_R$ are blue.
Since $G$ has no blue $K_3$, $H_R$ contains at most two parts of Gallai-partition and each part in $H_R$ contains no blue edges. By the induction hypothesis,
\begin{eqnarray}
|H_{R}|
\leq\begin{cases}
2f(r-1, s, t-1), \text{if blue is in the last $t$ colors,}\\
2f(r-1, s-1, t), \text{if blue is in the middle $s$ colors.}
\end{cases}\label{equ:T18}
\end{eqnarray}
If $|H_R|\leq 2$, then the Inequality (\ref{equ:T18}) still holds.
By the induction hypothesis and Inequalities (\ref{equ:T4}) and (\ref{equ:T7}), (\ref{equ:T12}) and (\ref{equ:T13}), we get that
\begin{eqnarray*}
|G|
&=&|H_{1}|+|H_{B}|+|H_{R}| \\
&\leq & \begin{cases}
2f(r-1, s, t)+3f(r-1, s, t-1)\leq f(r, s, t)<n, \text{if blue is in the last $t$ colors,}\\
2f(r-1, s-1, t+1)+3f(r-1, s-1, t)<n, \text{if blue is in the middle $s$ colors.}
\end{cases}
\end{eqnarray*}
a contradiction.
The proof of Case~\ref{case:r>=1} is completed.

\begin{case}\label{case:r>=2}
Both red and blue are in the first $r$ colors.
\end{case}

In this case, the graph $G$ contains neither red nor blue $B_{3}^+$.
Since $R_2(B_{3}^+)=18$, we have $q\leq17$.
First we prove the following claim.

\begin{claim}\label{Cla:rK3bK3}
Each part $H_i$ of $G$ contains neither red nor blue $K_3$.
\end{claim}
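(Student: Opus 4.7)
The plan is to argue by contradiction and rule out a red $K_3$ in any part, the blue case being symmetric. Suppose $H_1$ contains a red $K_3$ on $\{a,b,c\}$. By Fact~\ref{fact:dwi}, after relabeling there exists a part $V_2$ with $E_G(V_1,V_2)$ red, and Fact~\ref{fact:red edges} forces $V_2$ to contain no red edges; picking any $v\in V_2$ makes $\{a,b,c,v\}$ a red $K_4$.

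First I would reduce to the case $|V_2|=1$ and $V_2$ the unique part of the Gallai-partition red-connected to $V_1$. If $|V_2|\ge 2$, then for any $v'\in V_2\setminus\{v\}$ the edge $vv'$ is not red, so $\{a,b,c,v,v'\}$ spans a red $K_5-vv'$; this contains a red $B_3^+$ (take spine $ab$, pages $c,v,v'$, and the closing page-edge $cv$), contradicting the hypothesis on $G$. Similarly, a second part $V_k$ red-connected to $V_1$ would yield, for any $u\in V_k$, a red $K_5$ or $K_5-uv$ on $\{a,b,c,v,u\}$, again containing a red $B_3^+$. An analogous check rules out any $d\in V_1\setminus\{a,b,c\}$ with a red edge to $\{a,b,c\}$, since otherwise $\{a,b,c,d,v\}$ already contains a red $B_3^+$.

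Next I would exploit the minimality of $q$. With the above, $V_1$'s external color pattern is ``red to $V_2$, blue to every other part''. If $V_2$'s external pattern agreed with this on all $V_l$ with $l\ge 3$ (that is, $E_G(V_2,V_l)$ blue for every such $l$), then $V_1\cup V_2$ could be merged into one part of a Gallai-partition on $q-1\ge 3$ parts, contradicting the minimality of $q$. Hence some $V_l$ with $l\ge 3$ satisfies $E_G(V_2,V_l)$ red. I would then show that this $V_l$ contains no red $K_3$ and no blue $K_3$: a red $K_3$ in $V_l$ triggers the same structural analysis and forces $V_l$'s external pattern to coincide with that of $V_1$, so $V_1\cup V_l$ could be merged, again contradicting minimality; a blue $K_3$ in $V_l$ would, by the symmetric analysis, force the unique blue-connected part of $V_l$ to be a single vertex, but $V_l$ is blue-connected to $V_1$ and $|V_1|\ge 3$, a contradiction.

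The principal obstacle is the final step: using these constraints to exhibit either a red or blue $B_3^+$ in $G$. Since $V_l$ has no monochromatic $K_3$, $R_2(K_3)=6$ gives $|V_l|\le 5$. I would analyze 5-vertex sets combining $\{a,b,c\}$, $v\in V_2$, and vertices of $V_l$, using the red biclique from $V_2$ to $V_l$ together with the blue biclique from $V_1$ to $V_l$; in each subcase determined by the internal coloring of $V_l$ and the colors of $E_G(V_l,V_m)$ for the remaining parts, a red $B_3^+$ or blue $B_3^+$ should emerge. In the degenerate case $|V_l|=1$, a further iteration of the merging argument on a fourth part $V_m$ (which exists since $q\ge 4$) provides the contradiction. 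The blue $K_3$ case is dispatched by interchanging the roles of red and blue throughout.
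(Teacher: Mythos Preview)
Your approach is genuinely different from the paper's, and it has a real gap.

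The paper does not attempt a direct structural contradiction at all. It exploits the ambient induction on $3r+2s+t$ and the size $|G|=f(r,s,t)+1$. First, if some $H_1$ contained both a red and a blue $K_3$, then $|H_R|=|H_B|=1$ forces $q=3$, contradicting $q\ge 4$. So assume, say, $H_1$ contains a blue $K_3$ but no red $K_3$; then $|H_B|=1$ (else a blue $B_3^+$), and one bounds $|H_1|$ and $|H_R|$ by the induction hypothesis (splitting on whether $H_1$ has a red edge), obtaining
\[
|G|=|H_1|+|H_R|+|H_B|\ \le\ f(r-2,s+1,t)+f(r-1,s,t+1)+1\quad\text{or}\quad f(r-2,s+1,t+1)+f(r-1,s,t)+1,
\]
and in either case the right side is strictly less than $n$ by the tabulated inequalities on $f$. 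The point is that the claim is not a purely local statement about Gallai partitions; it holds \emph{because} $|G|$ is as large as $f(r,s,t)+1$.

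Your argument, by contrast, never invokes $|G|$ or the induction hypothesis, so it would have to work for every Gallai-colored complete graph with no red/blue $B_3^+$ and a minimal Gallai partition of $\ge 4$ parts --- which is a much stronger (and unproved) assertion. Concretely, two problems arise. First, the step ``$V_\ell$ has no monochromatic $K_3$, so $R_2(K_3)=6$ gives $|V_\ell|\le 5$'' is unjustified: you only ruled out red and blue $K_3$'s in $V_\ell$, but $G$ is $k$-colored with $k\ge 3$, so $V_\ell$ may well contain a $K_3$ in another color, and in any case $R_2(K_3)$ says nothing about a $k$-coloring. Second, the ``final step'' is never carried out, and the obvious candidate five-sets do not work: with $a,b,c\in V_1$, $v\in V_2$, $w\in V_\ell$, the red edges on $\{a,b,c,v,w\}$ are exactly $ab,ac,bc,av,bv,cv,vw$ (seven edges, a $K_4$ with a pendant at $v$), and the blue edges are $aw,bw,cw$; neither color contains $B_3^+$. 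Adding a second vertex from $V_\ell$ or a further part requires case analysis on colors you have no control over, and there is no visible mechanism to force an $8$-edge monochromatic subgraph.

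Your reductions up through ``$V_1$ is red only to the singleton $V_2$'' are correct and elegant, but to finish you should abandon the search for an explicit $B_3^+$ and instead bound $|H_1|$, $|H_R|$, $|H_B|$ via the induction hypothesis as the paper does; that is where the size hypothesis $|G|=f(r,s,t)+1$ actually gets used.
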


\begin{proof}
To the contrary, suppose that there exist one part, say $H_1$, containing a red $K_3$ and a blue $K_3$.
To avoid a red or blue $B_3^+$, we have that $|H_R|=|H_B|=1$.
So we get $q=3$, which contradicts with that $q\geq4$.
It follows that each part of $G$ at most contains a red or a blue $K_3$.
W.L.O.G., suppose that there is a part $H_1$ containing a blue $K_3$ but no red $K_3$.
To avoid a blue $B_3^+$, we get that $|H_B|=1$.
If $H_1$ contains no red edges, then to avoid a red $B_3^+$, $H_R$ contains no red $K_3$.
By induction hypothesis and Inequalities (\ref{equ:T6}) and (\ref{equ:T16}), we have that
\begin{eqnarray*}
|G|
=|H_1|+|H_R|+|H_B| \leq f(r-2, s+1, t)+f(r-1, s, t+1)+1
\leq  \frac{43}{51}f(r, s, t)+1<n,
\end{eqnarray*}
a contradiction.
Now we assume that $H_1$ contains a red edge. To avoid a red $B_3^+$, $H_R$ contains no red edges if $|H_R|\geq 3$. By induction hypothesis, $|H_R|\leq f(r-1, s, t)$. If $|H_R|\leq 2$, then $|H_R|\leq f(r-1, s, t)$ clearly.
By Inequalities (\ref{equ:T7}) and (\ref{equ:T14}), we have that
\begin{eqnarray*}
|G|
=|H_1|+|H_R|+|H_B|
\leq  f(r-2, s+1, t+1)+ f(r-1, s, t)+1\leq \frac{79}{102}f(r, s, t)+1<n,
\end{eqnarray*}
a contradiction.
Complete the proof of Claim~\ref{Cla:rK3bK3}.
\end{proof}
Now we consider the following subcases.
%6.1. There is a part $H_1$ containing a red edge and a blue $K_3$ but no red $K_3$ or blue $K_3^+$;\\
%6.2. There is a part $H_1$ containing a red and a blue edge but no red $K_3$ or blue $K_3^+$;\\
%6.3. There is a part $H_1$ containing a blue $K_3$ but containing no blue $K_3^+$ or no red edges;\\
%6.4. Each part $H_i$ is either a free part or a red part or a blue part.

%Again partition the remaining vertices of $G\setminus H_1$ into $H_R, H_B$ such that all edges $E_G(H_1, H_R)$ are red and $E_G(H_1, H_B)$ are blue.

\begin{subcase}\label{case:3.1}
There exists a part $H_1$ containing both a red and a blue edge.
\end{subcase}

To avoid a blue or red $B_3^+$, $H_R$ contains no red edges and $H_B$ contains no blue edges if $|H_R|, |H_B|\geq 3$. By induction hypothesis, we have that $|H_R|, |H_B|\leq f(r-1, s, t)$.
If $|H_R|, |H_B|\leq 2$, then $|H_R|, |H_B|\leq f(r-1, s, t)$ clearly.
By Inequalities (\ref{equ:T7}) and (\ref{equ:T15}), we have that
\begin{eqnarray*}
|G|
=|H_1|+|H_R|+|H_B|
\leq  f(r-2, s, t+2)+ 2f(r-1, s, t)\leq \frac{50}{51}f(r, s, t)<n,
\end{eqnarray*}
a contradiction.

Now we can assume that each part contains no red edges or no blue edges in the following.
We call a part \emph{free} if it contains neither red nor blue edges.
We call a part \emph{red (blue)} if it contains only red (blue) edges.
By induction hypothesis and Claim~\ref{Cla:rK3bK3}, we have that $|H_i|\leq f(r-2, s, t)$ if $H_i$ is a free part and $|H_i|\leq f(r-2, s, t+1)$ if $H_i$ is a red or blue part.

\begin{subcase}\label{case:3.2}
Each part $H_i$ is a free part.
\end{subcase}
By Inequality (\ref{equ:T17}), we have that
\begin{eqnarray*}
|G|=
\sum_{i=1}^{q}|H_{i}| \leq 17 f(r-2, s, t) =f(r, s, t)<n,
\end{eqnarray*}
a contradiction.
%So the claim holds.
\begin{subcase}\label{case:3.2}
There exists one part $H_1$ which is a red or blue part.
\end{subcase}

W.L.O.G., suppose that $H_1$ is a red part.
If $|H_R|\geq 3$, then $H_R$ has no red edges since $G$ has no red $B_3^+$.
Then by Claim ~\ref{Cla:B3+K3++}, we have that $|H_R|\leq f(r-2, s, t)+f(r-2, s, t+1)$.
If $|H_R|\leq 2$, then $|H_R|\leq 2\leq f(r-2, s, t)+f(r-2, s, t+1)$ clearly.
To avoid a blue $B_3^+$, $H_B$ contains no blue $K_3$.
Since $H_B$ contains neither blue $K_3$ nor red $B_3^+$ and $R(K_3, B_3^+)=9$, we have that $H_B$ contains at most 8 parts of Gallai-partition. First suppose that all parts in $H_B$ are free parts. Then $|H_B|\leq 8 f(r-2, s, t)$.
By Inequalities (\ref{equ:T11}) and (\ref{equ:T17}), we have that
\begin{eqnarray*}
|G|
=|H_1|+|H_R|+|H_B| \leq 2f(r-2, s, t+1)+9f(r-2, s, t)
\leq \frac{35}{51}f(r, s, t)<n,
\end{eqnarray*}
a contradiction.
Next suppose that there is one part, say $H_2$, such that $H_2 (H_2\subseteq H_B)$ is a blue or red part.
let $V_{2R}$ $(V_{2B})$ be the union of parts $H_i$ such that the edges in $E_G(V_2, V_i)$ are red (blue) and $H_{2R}=G[V_{2R}]$, $H_{2B}=G[V_{2B}]$.
If $H_2$ is a blue part, then to avoid a red or blue $B_3^+$, $H_{2R}$ contains neither red nor blue $K_3$ and $H_{2B}=\emptyset$.
By the induction hypothesis, $|H_B|=|H_2|+|H_{2R}|\leq f(r-2, s, t+1)+ f(r-2, s, t+2)$.
By Inequalities (\ref{equ:T11}), (\ref{equ:T15}) and (\ref{equ:T17}), we have that
\begin{eqnarray*}
|G|
&=&|H_1|+|H_R|+|H_B| \leq 3f(r-2, s, t+1)+f(r-2, s, t)+f(r-2, s, t+2)\\
&\leq& \frac{43}{51}f(r, s, t)<n,
\end{eqnarray*}
a contradiction.
If $H_2$ is a red part, then $H_{2B}$ contains no blue edges since $G$ contains no blue $B_3^+$.
Then by Claim~\ref{Cla:B3+K3++}, $|H_{2B}|\leq f(r-2, s, t+1)+f(r-2, s, t)$.
To avoid a red $B_3^+$, $H_{2R}$ contains no red edge if $|H_{2R}|\geq 3$.
It follows that the edges between each pair of parts in $H_{2R}$ are blue.
Since $G$ has no blue $B_3^+$, $H_{2R}$ contains no blue $K_3$.
It follows that there are at most two parts in $H_{2R}$.
If $H_{2R}$ contains only one part, then $H_{2R}$ is a free part or a blue part.
We can get that $|H_{2R}|\leq f(r-2, s, t+1)$.
If $H_{2R}$ contains two parts, then two parts both are free parts.
We have that $|H_{2R}|\leq 2f(r-2, s, t)\leq f(r-2, s, t+1)$.
%By the induction hypothesis, we have that
%$$
%|H_{2R}|\leq\begin{cases}
%2f(r-2, s, t)\\
%f(r-2, s, t)\\
%f(r-2, s, t+1)
%\end{cases}
%\leq f(r-2, s, t+1).
%$$
So \begin{eqnarray*}
|H_B|
=|H_2|+|H_{2R}|+|H_{2B}| \leq 3f(r-2, s, t+1)+f(r-2, s, t).
\end{eqnarray*}
By Inequalities (\ref{equ:T11}) and (\ref{equ:T17}), we have that
\begin{eqnarray*}
|G|
=|H_1|+|H_R|+|H_B| \leq 5f(r-2, s, t+1)+2f(r-2, s, t)
\leq \frac{46}{51}f(r, s, t)<n,
\end{eqnarray*}
a contradiction.
Complete the proof of Case~\ref{case:r>=2} and then the proof of Theorem~\ref{Thm:B3+K3}.
\end{proof}

\bibliography{bibfile1}

\end{document}